	\newcommand{\Complex}{\mathbb{C}}
	\newcommand{\Integer}{\mathbb{Z}}
	\newcommand{\CJ}{\mathcal{CJ}}
	\newcommand{\DT}[1]{#1 \dots #1}
	\newcommand{\bydef}{\stackrel{\mbox{\tiny def}}{=}}
	\def \lmod#1\rmod {\left|\smash{#1}\right|\vphantom{#1}}
	\newcommand{\pder}[2]{\frac{\partial #1}{\partial #2}}
	\newcommand{\pdertwo}[3]{\frac{\partial^2 #1}{\partial #2 \partial #3}}
	\newcommand{\name}[1]{\operatorname{\mathrm{#1}}}
	\newcommand{\G}{G(m,1,n)}
	\def \lmod#1\rmod {\left|\smash{#1}\right|\vphantom{#1}}
	\theoremstyle{plain}
	\newtheorem{theorem}{Theorem}
	\newtheorem{lemma}[theorem]{Lemma}
	\newtheorem{proposition}[theorem]{Proposition}
	\newtheorem{corollary}[theorem]{Corollary}
	\theoremstyle{definition}
	\def \definitionName {Definition}
	\newtheorem{definition}[theorem]{\definitionName}
	\theoremstyle{remark}
	\def \remarkName {Remark}
	\newtheorem{remark}[theorem]{\remarkName}
	\newtheorem{example}[theorem]{Example}
	\numberwithin{theorem}{section}
	\numberwithin{equation}{section}
	\author{Rapha\"el Fesler} 
	\address{R.~F.: Leonard Euler International Mathematical Institute in St.Petersburg, Russia}
	\email{raphael.fesler@gmail.com}
	\author{Denis Gorodkov}
	\address{D.~G.: University of Toronto, Ontario, Canada}
	\email{denis.gorod@gmail.com}
	\author{Maksim Karev}
	\address{M.~K.: Guangdong-Technion Israel Institute of Technology, 241 Daxue Road, Shantou, Guangdong, 515063, China}
	\email{maksim.karev@gtiit.edu.cn}
	\subjclass[2010]{05A15, 14N10}
	\keywords{Hurwitz number, Complex reflection groups}
	\def \Norm {\name{Norm}}
	\DeclareFontFamily{U}{mathx}{\hyphenchar\font45}
	\DeclareFontShape{U}{mathx}{m}{n}{<-> mathx10}{}
	\DeclareSymbolFont{mathx}{U}{mathx}{m}{n}
	\DeclareMathAccent{\widebar}{0}{mathx}{"73}
	\title{Hurwitz numbers for reflection groups $G(m,1,n)$}
\begin{document}
		
		\maketitle
		
		\begin{abstract}
			We are extending results from \cite{B-Hurwitz} by building a parallel
			theory of simple Hurwitz numbers for the
			reflection groups $G(m,1,n)$. We also study analogs of the cut-and-join
			operators.  An algebraic description as well as a description
			in terms of ramified covering of Hurwitz numbers is provided. An
			explicit formula for them in terms of Schur polynomials are
			provided.
			In addition the generating function of $G(m,1,n)$-Hurwitz numbers is shown to
			give rise to $m$ independent variables $\tau$-function of the KP hierarchy.
			Finally we provide an ELSV-formula type for these new Hurwitz numbers.
		\end{abstract}

		\tableofcontents
		
		\section*{Introduction}
		Hurwitz numbers were first introduced at the end of the 19th century
		in the work of A. Hurwitz \cite{H91}. Simple Hurwitz numbers 
		$h_{\lambda,m}$ where $\lambda$ is a partition $(\lambda_1,\dots,\lambda_\ell)$
		and $m$ a non-negative integer is the number divided by $n!$ of
		sequences of $m$ transpositions $(\sigma_1 \DT, \sigma_m)$ such that
		their product $\sigma_1 \DT\circ \sigma_m$ belongs to a conjugacy
		class $C_\lambda$ in the symmetric group $S_n$. Hurwitz showed in
		\cite{H91} that equivalently $h_{m,\lambda}$ counts isomorphism classes of ramified
		coverings of the sphere, with $m$ simple critical values, and an extra critical value 
		which has a fixed ramification profile $\lambda$.
		In the last decades, many interests were given to Hurwitz numbers.
		I. Goulden and D. Jackson showed that the generating function of Hurwitz numbers satisfies
		a second order parabolic PDE called the cut-and-join equation \cite{GJ96}.
		A. Okounkov showed in \cite{Okounkov} that the generating function of Hurwitz numbers
		is also a $\tau$-function of the KP hierarchy, implying that
		Hurwitz numbers can be written in terms of Schur polynomials \cite{LandoKazarian}.
		A striking result was proved in a seminal 2001 paper \cite{ELSV01}
		by T.Ekedahl, S.Lando, M.Shapiro and A.Vainshtein (and called the ELSV formula since
		then) built, somewhat unexpected, bridge by revealing deep
		connections of Hurwitz numbers with the geometry of moduli space of
		complex curves. (Namely, the Hurwitz numbers are the intersection
		constants of certain characteristic classes).
		
		Since then many variations of Hurwitz numbers have been introduced \cite{GGPN13, TwistedHurwitz, Lozhkin}, 
		and the most general its version is given by weighted Hurwitz numbers \cite{ACEH}. 
		They satisfy similar results as the ones for simple Hurwitz numbers.
		In this article, we are interested in Hurwitz numbers for the family of reflection
		groups $G(m,1,n)$.
		
		In the last few years, Hurwitz numbers for complex reflection groups have gained 
		more attention, for example in the work of E. Polak and D. Ross \cite{PR20} where polynomiality results are studied,
		the trilogy of papers \cite{DLM22a, DLM22b, DLM23} by T. Douvropoulos, J. B. Lewis, and A. H. Morales where a 
		a genus 0 for connected Hurwitz number for the complex reflection groups is obtained,
		and indirectly in the works of P. Johnson in \cite{Johnson} and H. Zhang and J. Zhou \cite{Zhang}, where a representation
		theory approach was used to study wreath product Hurwitz numbers.
		Our approach, nevertheless, differs from the works quoted above. The present paper
		can be seen as a generalization of the paper \cite{B-Hurwitz} written by the first author,
		applying similar techniques to the groups $G(m,1,n)$, and showing that the theory
		of disconnected simple Hurwitz numbers for the complex reflection groups $G(m,1,n)$ 
		is parallel to the classical one.
		
		In section 1 where we introduce an embedding of $G(m,1,n)$ 
		into $S_{mn}$, the reflections are sent to product of $m$ transpositions and
		power of cycles commuting with the permutation 
		$$\tau\bydef (1\ n+1\ 2n+1\dots(m-1)n+1)\dots(n\ n+n\ 2n+n\ \dots(m-1)n+n).$$
		We then recall the isomorphism between $G(m,1,n)$ and $\Integer/m\Integer\wr S_n$.
		Further, we introduce a convenient description of the conjugacy classes of $G(m,1,n)$
		which are indexed by $m$-tuples of partition $\overline{\lambda}=(\lambda_1,\dots,\lambda_{m-1})$
		(that we call colored partition),
		and show that this description agrees with the classical description of conjugacy classes
		(under a well-chosen isomorphism) of $\Integer/m\Integer\wr S_n$  as in I. Macdonald \cite{Macdonald}.
		
		In section 2 we define Hurwitz numbers for the reflection groups $G(m,1,n)$
		as the product of reflections. It is somewhat similar to the classical
		case where transpositions are used instead.
		
		Section 3 is devoted to the description of the cut-and-join operator and its application.
		We first describe in Theorem \ref{Th:CJ} how multiplying by a reflection affects the conjugacy classes. Then
		using the tools introduced in Section 1 and Section 2, we show in Proposition \ref{Prop:CJinU} that after a well
		chosen change of variable, the cut-and-join for the complex reflection group $G(m,1,n)$
		is actually a direct sum of $m$ independent classical cut-and-join, and $m-1$ sums
		of Euler fields. This allows us to describe the Hurwitz
		numbers of the complex reflection group $G(m,1,n)$ in terms of Schur polynomials (Theorem \ref{Th:G(m,1,n)-Schur}).
		
		In Section 4, Theorem \ref{Th:HGsolKP} we show that the generating function of Hurwitz numbers for 
		the reflection group $G(m,1,n)$ is a $m$-parameter family of $\tau$-functions of
		the KP hierarchy, independently in $m$ variables.
		
		Section 5 uses results from \cite{Zhang}, and describes in terms
		of ramified covering the Hurwitz numbers for reflection group $G(m,1,n)$,
		These are compositions of ramified coverings (Proposition \ref{Prop:HurwRamCover}).
		
		Finally in section 6, Theorem \ref{Th:ELSV} provides an ELSV formula-type for the logarithm of the
		generating function is proved using the simple
		form of the cut-and-join after the change of variables. 
		
		To conclude this introduction, it worth noting that the nice description
		of Hurwitz numbers for the complex reflection group $G(m,1,n)$ in terms
		of Schur polynomials (Theorem \ref{Th:G(m,1,n)-Schur}) motivates to look at
		weighted generalization \cite{GPH}, this will be done in a future work.
		In addition, it is worth applying the tools of this article to the complex
		reflection groups $G(m,p,n)$, it will also be done in some future work.

		\subsection*{Acknowledgements}
		The research of the first
		author was supported by the grant from the Government of
		the Russian Federation, Agreement No.075-15-2019-1620.
		
		\section{Complex reflection groups $G(m,1,n)$}
		\subsection{Definitions and embedding into $S_{mn}$}
		
		Let $V=\Complex^n$ be a complex vector space of dimension $n$ equipped with
		a Hermitian inner product. A \textit{reflection} in $V$ is a unitary automorphism of $V$ of finite order with exactly $n-1$ eigenvalues
		equal to 1. A finite subgroup  $W \subset GL(V)$ is called a \textit{complex reflection group} if it is generated by reflections. 
		
		Let $G(m,1,n)$ be the complex reflection group described in matrix terms as the  $n\times n$ monomial matrices whose non-zero entries are $m^{th}$ roots of unity.
		The reflection elements of this group are:
		\begin{itemize}
			\item reflections $R_{ij}^{(\alpha)}$ of order 2, that fix the hyperplane
			$x_j=\exp(2i\pi/m)^\alpha x_i$ for $1\leq i<j \leq n$ and 
			$\alpha=0,1,\dots,m-1$
			\item reflections $L_i^\alpha$ of order dividing $m$, which fix the hyperplane
			$x_i=0$ for $i=1,\dots,n$ and $\alpha=1,\dots,m-1$
		\end{itemize}
		
		The reflections $\langle L_1^1,R_{1,2}^{(0)},\dots, R_{n,n-1}^{(0)}\rangle$ generate the group $\G$, see \cite{LT09} for details.

		We define now an embedding of $\G$ into $S_{mn}$.
		Let the permutation of order $m$
		\begin{equation}
			\begin{aligned}
				\tau\bydef& (1\ n+1\ 2n+1\dots(m-1)n+1)(2 \ n+2\ 2n+2
				\dots(m-1)n+2)\dots\\
				&(n\ n+n\ 2n+n\ \dots(m-1)n+n)
			\end{aligned}
		\end{equation}
		
		\begin{proposition}\label{Prop:Embedding}
			There exist an embedding $\Psi: G(m,1,n) \hookrightarrow S_{mn}$
			such that
			$\Psi(R_{ij}^{(\alpha)})=(i\ \tau^\alpha(j))(\tau(i)\ \tau^{\alpha+1}(j))\dots
			(\tau^{m-1}(i)\ \tau^{\alpha-1}(j))$ for $\alpha=0,\dots m-1$ and assuming that $\tau^0(j)=j$ and 
			$\Psi(L_i^\alpha)=(i\ \tau(i)\dots \tau^{m-1}(i))^\alpha$, where
			the power of $\alpha$ means multiplication of the permutation
			by itself $\alpha$ times.
			The image of $\Psi$ is the normalizer of $\tau$
			
			\begin{equation*}
				\Psi(G(m,1,n)) = \{\sigma \in S_{mn} \mid \sigma\tau = \tau\sigma\}
				\bydef \Norm(\tau),
			\end{equation*}
		\end{proposition}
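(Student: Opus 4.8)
The plan is to realize $\Psi$ as the permutation representation afforded by the natural action of $\G$ on a distinguished set of $mn$ vectors, and then to pin down its image by a counting argument. Set $\zeta = \exp(2i\pi/m)$ and consider the $mn$ ``colored basis vectors'' $\zeta^k e_i$ for $k = 0, \dots, m-1$ and $i = 1, \dots, n$, which I identify with $\{1, \dots, mn\}$ through $\zeta^k e_i \mapsto kn + i = \tau^k(i)$. Since every element of $\G$ is a monomial matrix with $m^{th}$-root-of-unity entries, it permutes this set: if $A e_i = \zeta^{c_i} e_{\pi(i)}$, then $A(\zeta^k e_i) = \zeta^{k + c_i} e_{\pi(i)}$. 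This defines a homomorphism $\Psi \colon \G \to S_{mn}$, and it is injective because a monomial matrix fixing every $\zeta^k e_i$ in particular fixes every $e_i$ and is therefore the identity.

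First I would check that $\Psi$ carries the reflections to the stated cycles. The matrix $L_i^\alpha = \operatorname{diag}(1, \dots, \zeta^\alpha, \dots, 1)$ sends $\zeta^k e_i \mapsto \zeta^{k+\alpha} e_i$ and fixes the other columns, which is exactly the $\alpha^{th}$ power of the $m$-cycle $(i\ \tau(i)\ \dots\ \tau^{m-1}(i))$. The reflection $R_{ij}^{(\alpha)}$ is the order-two monomial matrix with $e_i \mapsto \zeta^\alpha e_j$ and $e_j \mapsto \zeta^{-\alpha} e_i$ (the unique such involution fixing the hyperplane $x_j = \zeta^\alpha x_i$); tracing its action on colored vectors swaps $\tau^k(i)$ with $\tau^{k+\alpha}(j)$ for each $k$, which gives precisely the product of $m$ transpositions in the statement. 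This step is the most error-prone part of the argument, since it requires keeping the phase bookkeeping and the index reductions modulo $m$ straight, but it is otherwise routine.

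Next I would locate the image inside $\Norm(\tau)$. The key observation is that $\tau = \Psi(\zeta I)$: the scalar matrix $\zeta I$ lies in $\G$ and sends $\zeta^k e_i \mapsto \zeta^{k+1} e_i$, which is exactly $\tau$. Because $\zeta I$ is central in $\G$ (indeed in all of $GL(V)$), its image $\tau$ commutes with every element of $\Psi(\G)$, and hence $\Psi(\G) \subseteq \{\sigma \in S_{mn} \mid \sigma \tau = \tau \sigma\} = \Norm(\tau)$.

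Finally I would close the argument by comparing orders. On the one hand $|\G| = m^n\, n!$, since an element is determined by a permutation in $S_n$ together with a choice of $m^{th}$ root of unity in each of the $n$ nonzero slots. On the other hand $\tau$ is a product of $n$ disjoint $m$-cycles, so its centralizer in $S_{mn}$ has order $m^n\, n!$ by the standard cycle-type formula $\prod_\ell \ell^{a_\ell}\, a_\ell!$. Since $\Psi$ is injective with image contained in $\Norm(\tau)$ and $|\Psi(\G)| = |\G| = m^n n! = |\Norm(\tau)|$, the inclusion is forced to be an equality, which completes the proof. The only genuine subtlety is the reflection computation of the second paragraph; everything else is either a standard order count or a faithfulness check.
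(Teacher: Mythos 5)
Your proof is correct, and it diverges from the paper's in the one place where divergence matters: the inclusion $\Norm(\tau)\subseteq\Psi(\G)$. The paper proves surjectivity constructively: it first establishes (Proposition \ref{Prop:CycleDecomp}) that every element of $\Norm(\tau)$ factors into $\beta_0$- and $\beta_\alpha$-cycles, and then exhibits each such cycle explicitly as a product of the generators $r^{(0)}_{ij}$ and $\ell^\gamma_i$. You instead compare cardinalities: $\lvert\G\rvert=m^n\,n!$, while $\tau$ has cycle type $m^n$ in $S_{mn}$, so its centralizer has order $m^n\,n!$ by the formula $\prod_\ell \ell^{a_\ell}a_\ell!$; injectivity of $\Psi$ plus the inclusion $\Psi(\G)\subseteq\Norm(\tau)$ then forces equality. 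Both arguments are sound (I checked your phase bookkeeping for $R_{ij}^{(\alpha)}$: with $R_{ij}^{(\alpha)}e_i=\zeta^\alpha e_j$ and $R_{ij}^{(\alpha)}e_j=\zeta^{-\alpha}e_i$ one indeed gets the transpositions $(\tau^k(i)\ \tau^{k+\alpha}(j))$ for all $k$). Your route is shorter and less error-prone for this proposition taken in isolation, and your observation that $\tau=\Psi(\zeta I)$ with $\zeta I$ central is a cleaner reason for $\Psi(\G)\subseteq\Norm(\tau)$ than checking commutation on generators. What the counting argument does \emph{not} buy you is the explicit $\beta_\alpha$-cycle decomposition of elements of $\Norm(\tau)$, which the paper extracts as a by-product and then relies on heavily afterwards --- for the description of conjugacy classes $C_{\overline\lambda}$ (Proposition \ref{Prop:ConjClass}) and for the multiplicity computations behind the cut-and-join operators (Theorem \ref{Th:CJ}). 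So in the economy of the whole paper the constructive proof is doing double duty, whereas yours would leave Proposition \ref{Prop:CycleDecomp} still to be proved separately.
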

		\begin{remark}
			It is also clear that if
			$\sigma \in \Norm(\tau)$ then 
			$\tau^i \sigma(\tau^{-1})^i=\sigma$ for
			$i=1,\dots,m$
		\end{remark}

		\begin{remark}\label{rem:set_action}
			Another way to define this embedding is by considering the faithful action of $G(m,1,n)$ on the $nm$ unit vectors $\mathbf v_{j,k} = (0, \ldots, 0, \exp(2k\pi i/ m), 0, \ldots, 0)$ where the non-zero element is in position $j$, and $k$ runs from $0$ to $m-1$. This faithful action induces an embedding in $S_{mn}$ given the identification $\mathbf v_{j,k} \mapsto j + nk$ with the set $\{1, \ldots, mn\}$ on which $S_{mn}$ is acting.
		\end{remark}

		To prove Proposition \ref{Prop:Embedding} we describe the
		cycle decomposition of elements in $\Norm(\tau)$.
		
		\noindent Introduce a notation more convenient for our purposes: 
		
		\begin{equation}\label{Eq:rij}
			r^{(\alpha)}_{ij}
			\bydef (i\ \tau^\alpha(j))(\tau(i)\ \tau^{\alpha+1}(j))\dots
			(\tau^{m-1}(i)\ \tau^{\alpha-1}(j)) 
		\end{equation}  
		for all $1 \le i \ne j \le mn, i \ne j\pm kn$ for $ k=1,\dots,m-1$
		addition is modulo $mn$. So if $1 \le i < j \le n$ then $r_{ij}^{(\alpha)} =r_{\tau(i),\tau(j)}^{(\alpha)}=\dots =
		r_{\tau^{m-1}(i),\tau^{m-1}(j)}^{(\alpha)} = \Phi(R_{ij}^{(\alpha)})$. 
		
		Also denote 
		\begin{equation}\label{Eq:li}
			l_i^\alpha \bydef (i\ \tau(i)\dots \tau^{m-1}(i))^\alpha\ \text{for all}\ 1 \le i \le
			mn;
		\end{equation}
		so, $l_i^{\alpha} = l_{\tau(i)}^{\alpha}=\dots=
		l_{\tau^{m-1}(i)}^{\alpha}= \Phi(L_i^\alpha)$ if $i \le n$.

		In the following proposition, the power of
		$\tau$ are considered modulo $m$, denote also the integers
		$k_\alpha=\frac{m}{\gcd(m,\alpha)}$.
		
		\begin{proposition}\label{Prop:CycleDecomp}
			
			Let $x\in \Norm(\tau)$, and
			let $x=a_1\dots a_r$ be its
			cycle decomposition. 
			Then for every $a_j$, $j = 1\dots r$ one
			of the following is true:
			\begin{enumerate}
				\item the cycle decomposition contains
				$m-1$ other cycles $a_{n_1}, \ldots, a_{n_{m-1}}$ of the same length such that $$a_j = \tau a_{n_1}\tau^{-1}= 
				\tau^2a_{n_2}\tau^{-2}=\dots =\tau^{m-1}a_{n_{m-1}}\tau^{-(m-1)},$$ or
				
				\item for some $1\leq \alpha\leq m-1$, the cycle
				$a_j$ has the following structure: 
				its length is a multiple of $k_\alpha$, it is $\tau^\alpha$ invariant, and
				there are $\gcd(m,\alpha)-1$ other cycles $a_{j_{2}},\dots,a_{j_{\gcd(m,\alpha)}}$
				having the same length, such that $a_j = \tau a_{j_2}\tau^{-1}= 
				\tau^2a_{j_3}\tau^{-2}=\dots =\tau^{\gcd(m,\alpha)-1}a_{{\gcd(m,\alpha)}}\tau^{-(\gcd(m,\alpha)-1)}.$	 	
			\end{enumerate}
		\end{proposition}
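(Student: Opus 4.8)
The plan is to analyze how conjugation by $\tau$ permutes the cycles of $x$. Since $x\in\Norm(\tau)$ commutes with $\tau$, conjugation by $\tau$ is an automorphism of $S_{mn}$ fixing $x$, so it carries the cycle decomposition $x=a_1\cdots a_r$ to itself; concretely, for any cycle $a=(c_1\ \dots\ c_\ell)$ of $x$ the conjugate $\tau a\tau^{-1}=(\tau(c_1)\ \dots\ \tau(c_\ell))$ is again a cycle of $x$ of the same length. Thus the cyclic group $\langle\tau\rangle\cong\Integer/m\Integer$ acts on the finite set $\{a_1,\dots,a_r\}$ of cycles of $x$ by conjugation, preserving length, and I would organize the whole argument around the orbits of this action.

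First I would fix a cycle $a_j$ and apply the orbit--stabilizer theorem: its orbit has some size $d$ dividing $m$, and its stabilizer in $\langle\tau\rangle$ is the subgroup $\langle\tau^d\rangle$. The orbit then consists of exactly $d$ pairwise distinct cycles of equal length, namely $a_j,\ \tau a_j\tau^{-1},\ \dots,\ \tau^{d-1}a_j\tau^{-(d-1)}$, which are related to $a_j$ by precisely the conjugations displayed in the statement. If $d=m$ the orbit has $m$ elements and we land in alternative (1). Otherwise $d<m$, and I would set $\alpha=d$, so that $\gcd(m,\alpha)=d$ and hence $k_\alpha=m/d$; the membership of $\tau^{d}$ in the stabilizer is exactly the assertion that $a_j$ is $\tau^\alpha$-invariant, while the orbit supplies the $\gcd(m,\alpha)-1=d-1$ companion cycles carrying the required conjugation relations.

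The one substantive point — and the step I expect to be the main obstacle — is the divisibility of the length of $a_j$ by $k_\alpha$ in alternative (2). Here I would exploit the explicit cycle type of $\tau^d$: since $d\mid m$, each of the $n$ length-$m$ cycles of $\tau$ splits under $\tau^d$ into $\gcd(m,d)=d$ cycles of length $m/d$, so $\tau^d$ acts on $\{1,\dots,mn\}$ with every orbit of size exactly $k_\alpha=m/d$. Because $\tau^d$ stabilizes the cycle $a_j$, the support $S$ of $a_j$ is a $\tau^d$-invariant subset of $\{1,\dots,mn\}$, hence a disjoint union of full $\tau^d$-orbits; its cardinality, which is the length of $a_j$, is therefore a multiple of $k_\alpha$. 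Since every cycle $a_j$ falls into exactly one of the two alternatives according to whether its orbit size equals $m$ or is a proper divisor, assembling these observations over all $j$ completes the proof.
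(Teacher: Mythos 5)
Your argument is correct and takes essentially the same route as the paper: both proofs rest on the observation that, since $x$ commutes with $\tau$, conjugation by $\tau$ permutes the cycles of $x$, and then classify each cycle by the size $d$ of its orbit under $\langle\tau\rangle$ ($d=m$ giving case (1), $d<m$ giving case (2) with $\alpha=d$). Your orbit--stabilizer packaging, together with the remark that every $\langle\tau^{d}\rangle$-orbit on $\{1,\dots,mn\}$ has size exactly $m/d$ (which is what forces the cycle length to be a multiple of $k_\alpha$), is a clean substitute for the paper's explicit description of the invariant cycle as a concatenation of blocks $a_s,\tau^{\alpha}(a_s),\dots,$ and if anything makes that divisibility step more transparent than in the original.
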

		
		The first case will be called a \textit{$\beta_0$-cycle}, and the second
		\textit{$\beta$-cycles of type $\alpha$} and denoted $\beta_\alpha$-cycle
		for short. The $\beta_0$-cycle will play a special role later on in the 
		text, that is why they are extracted from the case (2), where they
		can be seen as a $\beta_\alpha-$cycle with $\alpha=0$.
		
		\begin{example}
			In $\Integer/6\Integer\wr S_n$ 
			$$\tau = (1\ n+1\dots 5n+1)\dots(n\ 2n\dots 6n)$$
			and let $a\bydef (a_1\dots a_k)$ such that
			$a_i\neq \tau^\alpha(a_j)$ for $i,j=1,\dots,k$ and $\alpha=0,\dots ,5$
			then we have the following possible $\beta_\alpha-$cycles
			\begin{itemize}
				\item $\beta_0-$cycle: $(a)(\tau(a))\dots(\tau^5(a))$ 
				\item  $\beta_1$-cycle: $(a,\tau(a),\tau^2(a),\dots,\tau^5(a))$ 
				\item  $\beta_2$-cycle: $(a,\tau^2(a),\tau^4(a))(\tau(a),\tau^3(a),\tau^5(a))$ 
				\item $\beta_3-$cycle: $(a,\tau^3(a))(\tau(a),\tau^4(a))(\tau^2(a),\tau^5(a))$ 
				\item  $\beta_4$-cycle: $(a,\tau^4(a),\tau^2(a))(\tau(a),\tau^5(a),\tau^3(a))$ 
				\item  $\beta_5$-cycle: $(a,\tau^5(a),\tau^4(a),\dots,\tau(a))$ 
			\end{itemize}
			
			where for example in the $\beta_2$-cycle, we have that $k_\alpha=k_2=3$.
		\end{example}
		
		\begin{proof}
			To ease the notation let 
			\begin{equation*}
				x=a_1a_2\dots a_r
			\end{equation*}
			be the cycle of decomposition of $x$, where 
			\begin{equation*}
				a_i = (a_{1i}\dots a_{1m_i}).
			\end{equation*}
			
			Given that $\tau^\alpha x \tau^{-\alpha}=x$ for $\alpha=1,\dots,m$
			by the definition of $\Norm(\tau)$, this implies 
			\begin{equation}\label{Eq:CondTau}
				\begin{aligned}
					&x=\tau(a_1)\dots \tau(a_{r})\\
					&x=\tau^2(a_1)\dots \tau^2(a_{r})\\
					&\vdots\\
					&x=\tau^{m-1}(a_1)\dots \tau^{m-1}(a_{r}).
				\end{aligned}
			\end{equation}
			By the uniqueness of the cycle decomposition, every $\tau(a_j),
			\tau^2(a_j),\dots,\tau^{m-1}(a_j)$
			must be equal to some $a_s$.
			If all these cycles are different they form a $\beta_0$ cycle.
			Let now	$1\leq \alpha<m$, then 
			$\tau^\alpha(a_s) = (\tau^\alpha(a_{1s})\dots \tau^\alpha(a_{1m_s}))$
			and one has that  $\tau^\alpha(a_s)\neq \tau^{2\alpha}(a_s)\neq\dots
			\neq\tau^{k_\alpha}(a_s)$.
			By definition of $k_\alpha$ we have $\tau^\alpha(\tau^{k_\alpha}(a_s))=a_s$
			as $k_\alpha+\alpha =0\mod m$, implying that we have a cycle of the form 
			$a = (a_s,\tau^\alpha(a_s),\tau^{2\alpha}(a_s),\dots, \tau^{k_\alpha}(a_s))$. 
			Furthermore the condition
			given in (\ref{Eq:CondTau}), and the uniqueness of cycle decomposition
			force
			to have $\gcd(m,\alpha)-1$ other cycles $a_{j_{2}},\dots,a_{j_{\gcd(m,\alpha)}}$
			having the same length, such that $a = \tau a_{j_2}\tau^{-1}= 
			\tau^2a_{j_3}\tau^{-2}=\dots =\tau^{\gcd(m,\alpha)-1}a_{{\gcd(m,\alpha)}}\tau^{-(\gcd(m,\alpha)-1)}$	 	
			and this finishes the proof.
			
		\end{proof}

		\begin{proof}[Proof of proposition \ref{Prop:Embedding}]
			By remark \ref{rem:set_action}, the map
			$\Psi: \G\mapsto S_{mn}$, is a group
			homomorphism and an embedding. We write
			$\Psi(x)=\tilde d_1,\DT \tilde d_N$, where $\tilde{d}_i$ is
			the the image of the reflection $d_i$ under $\Psi$.
			
			The elements $\Psi(r^{(\alpha)}_{ij})$ and $\Psi(\ell^\gamma_i)$ 
			commute with $\tau$, implying that $\Psi(\G)\subset \Norm(\tau)$.
			
			We are left to show that $ \Norm(\tau)\subset \Psi(\G)$.
			To do so, we prove that any $x\in \Norm(\tau)$, which is a product
			of $\beta_0-$cycle and $\beta_\alpha$-cycles, can be written
			in terms of product of $r^{(k)}_{ij}$ and $\ell^k_i$.
			
			For a $\beta_0-$cycle
			\begin{equation*}
				(a_1\dots a_j)(\tau(a_1)\dots\tau(a_j))\dots
				(\tau^{m-1}(a_1)\dots\tau^{m-1}(a_j))=r_{a_{j-1},a_j}^{(0)}\dots r_{a_2,a_3}^{(0)}r_{a_1,a_2}^{(0)}\
			\end{equation*} 
			
			Now, a straightforward computation shows that
			if $x\in \beta_\alpha$ then $x\ell_\gamma\in \beta_{\alpha+\gamma}$ 
			(see also section \ref{MultByl}). It follows that
			for a $\beta_\alpha$-cycle  $a = (a_{1}\dots a_{j})$
			
			\begin{equation*}
				\begin{aligned}
					&(a,\tau^\alpha(a),\tau^{2\alpha}(a),\dots,\tau^{k_\alpha}(a))(\tau(a),\tau^{\alpha+1}(a),
					\tau^{2\alpha+1}(a),\dots,\tau^{k_{\alpha+1}}(a))\dots\\
					&\dots (\tau^j(a),\tau^{\alpha+\gamma}(a),
					\tau^{2\alpha+\gamma}(a),\dots,\tau^{k_{\alpha+\gamma}}(a))\\
					&=\ell_j^{\gamma}r_{a_{j-1},a_j}^{(0)}\dots r_{a_2,a_3}^{(0)}r_{a_1,a_2}^{(0)}
				\end{aligned}
			\end{equation*}
			as $r_{a_{j-1},a_j}^{(0)}\dots r_{a_2,a_3}^{(0)}r_{a_1,a_2}^{(0)} \in \beta_0$.
		\end{proof}
		
		From now on, and by abuse of notation, we denote by $\G$ both the image
		$\Psi(\G)\subset S_{mn}$ and $\G$ itself.

		\subsection{Wreath product, correspondence and conjugacy classes}
		
		There is a way to represent the elements of $G(m,1,n)$ \cite{ST54},\cite{DLM22a}-\cite{DLM23}. Namely,
		given an element $z$ in $G(m,1,n)$, one can encode it
		by a pair $[u;g]$ with $u\in S_n$ and 
		$g=(g_1,\dots,g_n)\in (\Integer/ m\Integer)^n$ as follows:
		for $k= 1,\dots, n$ the non-zero entry in column $k$ of $z$ is in row 
		$u(k)$, and the value of the entry is $\exp(\frac{2i\pi g_k}{m})$.
		The product rule reads 
		
		\begin{equation*}
			[u;g]\cdot[v;b]= [uv;v(g) +b] \ \text{where} \ v(g)=(g_{v(1)},\dots,g_{v(n)})
		\end{equation*}
		
		This description provides $G(m,1,n)$ with a structure of the wreath product 
		$\Integer/m\Integer \wr S_n$. The reflections
		$r_{ij}^{(\alpha)}$ and $\ell_i^\alpha$ are represented as follows: 
		\begin{gather*}
			r_{ij}^{(\alpha)} =  [(i,j);0,\dots,0,\alpha,0,\dots,0,-\alpha,0,\dots,0],\\
			\ell_i^\alpha = [(id);0,\dots,0,\alpha,0,\dots,0].\end{gather*}
		
		It is convenient to introduce an action of $\mathbb{Z}/m\mathbb{Z} \wr S_n $ on the set $\{1,\ldots,mn\}$  
		using the figure \ref{fig:circles}. The element $[\sigma,(g_1, \ldots, g_n)]$ acts by permuting the circles in accordance with $\sigma$, and the $i$-th circle is rotated by $g_i$ counterclockwise (which corresponds to adding $g_i$ in $\mathbb{Z}/m\mathbb{Z}$).This induces a faithful action, and, therefore, induces an embedding $\mathbb{Z}/m\mathbb{Z} \wr S_n \hookrightarrow S_{mn}$ (compare with remark \ref{rem:set_action}).
		
		Elements of $\mathbb{Z}/m\mathbb{Z} \wr S_n$ correspond to permutations on the $mn$ elements that preserve the circles (possibly permuting them entirely) with the order of the elements in each individual circle. The reflection $r_{ij}^{(\alpha)}$ would correspond to rotating the $i$-th circle by $\alpha$, rotating the $j$-th circle by $-\alpha$, and permuting them, whereas the reflection $l_i^\alpha$ just rotates the $i$-th circle by $\alpha$.
		\begin{figure}
			\centering
			\includegraphics[scale=0.65]{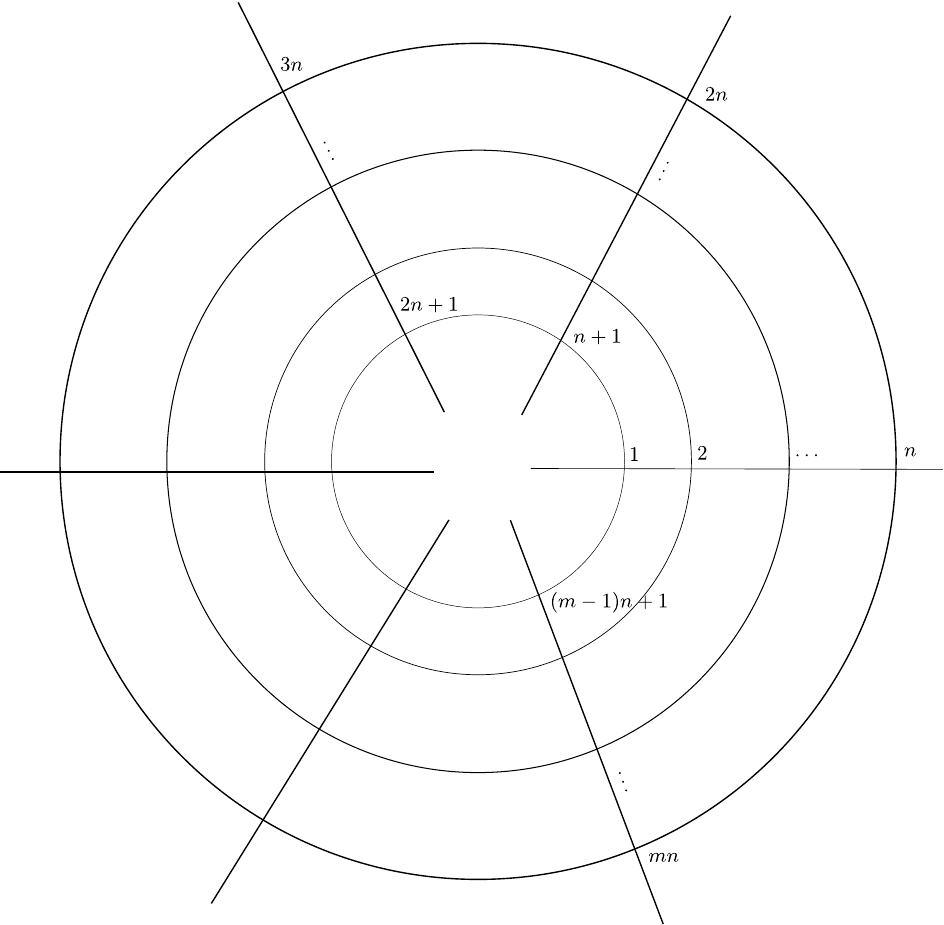}
			\caption{Presentation of the set of $mn$ elements}
			\label{fig:circles}
		\end{figure}
		
		The conjugacy classes of the symmetric group $S_{n}$ are in one-to-one correspondence with partitions of $n$, provided by cycle decomposition of the elements. A similar conjugacy class description for $\Integer/m\Integer \wr S_n$
		can be found in \cite{Macdonald}.
		We remind it here for completeness following closely \cite{Macdonald},
		then translate it into the language developed in the previous section.
		
		Denote $\Lambda$ to be the set of all partitions $\Lambda = \cup_{n \ge 0}\{\lambda\ |\ \lambda \vdash n\}$. A \emph{$m-$colored partition} is a map $\mathbb Z / m\mathbb Z \to \Lambda$.
		A pair $p =[u;g]$ with $u\in S_n$ and 
		$g=(g_1,\dots,g_n)\in (\Integer/ m\Integer)^n$ defines an $m-$colored partition as follows: the image of the element $r\in \mathbb Z /m \mathbb Z$ is a partition formed by lengths of those cycles $(i_1\ldots i_k)$ of $u\in S_n$, such that their \emph{cycle-product} $g_{i_1} + \cdots g_{i_k}$ equals $r$. The constructed $m-$colored partition determines the conjugacy class of $[u,g]$, and two pairs are of the same conjugacy classes if the corresponding colored partitions coincide.
		
		We adopt the following notation for  $m-$colored partitions: for a $m-$tuple of partitions $\lambda_0,\ldots,\lambda_{m-1}\in \Lambda$ write $\bar \lambda = \lambda_0|\ldots|\lambda_{m-1}$ for the map 
		$$\bar \lambda \colon \begin{matrix}\mathbb Z/m \mathbb Z \to \Lambda \\ r \mapsto \lambda_r\end{matrix}.$$
		
		
		We now show that the sets 
		$C_{\overline{\lambda}}=C_{\lambda_0|\dots|\lambda_{m-1}}$ 
		of elements in $G(m,1,n)\subset S_{mn}$ 
		such that their cycle decomposition contains $\beta_\alpha$-cycles with cycle structure $\lambda_\alpha$ for $\alpha=0,\dots,m-1$ are the conjugacy classes of  $\Integer/m\Integer \wr S_n$.
		
		We will show this using the previously discussed embedding of $\mathbb{Z}/m\mathbb{Z}\wr S_n$ into $S_{mn}$. Explicitly, it can be described as follows:
		
		\begin{multline*}
			[(g_1, \ldots, g_n),
			\bigl(
			\begin{smallmatrix}
				1 & 2 & 3 & \ldots & n \\
				\sigma(1) & \sigma(2) & \sigma(3) & \ldots & \sigma(n)
			\end{smallmatrix}
			\bigr)] \longmapsto \\
			\longmapsto
			\bigl(
			\begin{smallmatrix}
				1 & 2 & 3 & \ldots & n & \ldots & kn+1 & \ldots\ldots & (k+1)n & \ldots \\
				\sigma(1) + g_1 n & \sigma(2) + g_2 n & \sigma(3) + g_3 n & \ldots & \sigma(n) + g_n n & \ldots\ldots & (\sigma(1) + kn) + g_1n & \ldots & (\sigma(n) + kn) + g_n n & \ldots
			\end{smallmatrix}
			\bigr)
		\end{multline*}
		
		where the images of the permutation in $S_{mn}$ are considered modulo $mn$.
		
		Then, let us compute the image of the element $ [(g_1, \ldots, g_n), (i_1\; i_2\; \ldots \; i_k)] \in \mathbb{Z}/m\mathbb{Z}\wr S_n $ in $S_{mn}$ for an individual cycle $\sigma = (i_1 \; i_2 \; \ldots \; i_k)$ in $S_n$. If we try to recover the cycle structure, we will get the following:
		\begin{align*}
			i_1 &\mapsto i_2 + g_{i_1}n \mod mn\\
			i_2 + g_{i_1}n \mod mn&\mapsto (i_3 + g_{i_2}n) + g_{i_1}n \mod mn\\
			&\ldots\\
			i_k + (g_{i_1} + g_{i_2} + \ldots + g_{i_k-1})n \mod mn&\mapsto i_1 + (g_{i_1} + a_{i_2} + \ldots + g_{i_k})n \mod mn
		\end{align*}
		This proceeds until, after repeating this process some $l$ times, $l(g_{i_1} + g_{i_2} + \ldots + g_{i_k})$ becomes divisible by $m$, and we return back to $g_1$ completing the cycle. If $l = m$, then the image is this one cycle, and this corresponds to the $\beta_0$ cycles in notations from the previous section. However, if $l|m$, then the image will contain $m/l$ cycles received from the constructed one by adding $n$ to all the elements. The resulting permutation type depends on $k$ and $(g_{i_1} + g_{i_2} + \ldots + g_{i_k}) \mod m$, which is exactly the input from the colored partition. Moreover, $g_1+\dots+ g_k \mod m$ is exactly the power $\alpha$ of $\tau$ which defines the $\beta_\alpha$-cycle.
		
		This finishes the proof of
		\begin{proposition}\label{Prop:ConjClass}
			The set $C_{\overline{\lambda}}$ is a conjugacy class.
			Every conjugacy class in $\G$ is $C_{\overline{\lambda}}$
			for some partitions $\lambda_0,\dots,\lambda_{m-1}$ such that 
			$|\lambda_0|+\dots+|\lambda_{m-1}|=n$
		\end{proposition}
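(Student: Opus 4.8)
The plan is to reduce the whole statement to Macdonald's classification, which we recalled above: two elements of $\mathbb{Z}/m\mathbb{Z}\wr S_n$ are conjugate if and only if the $m$-colored partitions built from their cycle-products coincide. The only thing I need to supply is a dictionary between that combinatorial colored partition and the $\beta_\alpha$-cycle data of the image in $S_{mn}$, and this dictionary is exactly what the cycle-tracking computation preceding the statement delivers.

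First I would make the correspondence explicit. Given $[u;g]$, decompose $u$ into disjoint cycles. For a single cycle $c=(i_1\,i_2\,\ldots\,i_k)$ of $u$ with cycle-product $\alpha = g_{i_1}+\cdots+g_{i_k} \bmod m$, the computation shows that the corresponding block of $\Psi([u;g])$ in $S_{mn}$ is precisely a $\beta_\alpha$-cycle in the sense of Proposition \ref{Prop:CycleDecomp}: iterating the induced permutation returns to its start after $k\cdot k_\alpha$ steps, since one needs $l$ passes with $l\alpha \equiv 0 \bmod m$, i.e.\ $l=k_\alpha$, and the resulting long cycle comes with $\gcd(m,\alpha)$ companions obtained by the $\tau$-shift, matching case (2) of Proposition \ref{Prop:CycleDecomp} with parameter $\alpha$. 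Recording the underlying length $k$ of each such contribution into the color $\alpha$ therefore reassembles exactly Macdonald's colored partition $\bar\lambda = \lambda_0|\cdots|\lambda_{m-1}$, in which $\lambda_\alpha$ collects the lengths $k$ of those $u$-cycles whose cycle-product is $\alpha$. In short, the $\beta_\alpha$-cycle structure of $\Psi([u;g])$ and Macdonald's colored partition are the same datum.

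With this dictionary in hand the proposition is immediate. By definition $C_{\overline{\lambda}}$ is the set of elements of $\G$ whose $\beta_\alpha$-cycle structures are $\lambda_\alpha$ for every $\alpha$, which by the previous paragraph is exactly the set of elements whose Macdonald colored partition equals $\bar\lambda$. Macdonald's theorem then says this set is a single conjugacy class, giving the first assertion, and that every conjugacy class arises in this way, giving the second. The constraint $|\lambda_0|+\cdots+|\lambda_{m-1}|=n$ holds because each cycle of $u\in S_n$ deposits its length into exactly one color and the cycle lengths of $u$ sum to $n$.

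I expect the only point needing genuine care to be the bookkeeping in the dictionary step: one must check that the part recorded in $\lambda_\alpha$ is the original $S_n$-cycle length $k$, as in Macdonald, and not the longer $S_{mn}$-cycle length $k\cdot k_\alpha$, and that the color index equals the cycle-product $g_{i_1}+\cdots+g_{i_k}$ modulo $m$ rather than a multiple of it. Both are pinned down by the explicit iteration displayed before the statement (and the first is confirmed a posteriori by the identity $\sum_\alpha|\lambda_\alpha|=n$), so no argument beyond faithfully transcribing that computation into the language of Proposition \ref{Prop:CycleDecomp} is required.
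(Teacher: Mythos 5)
Your proposal is correct and follows essentially the same route as the paper: both establish the dictionary identifying the $\beta_\alpha$-cycle structure of $\Psi([u;g])$ with Macdonald's $m$-colored partition via the explicit cycle-tracking computation for a single cycle of $u$, and then invoke Macdonald's classification of the conjugacy classes of $\mathbb{Z}/m\mathbb{Z}\wr S_n$. Your bookkeeping (the recorded part is the $S_n$-cycle length $k$ rather than $k\,k_\alpha$, the color is the cycle-product modulo $m$, and the long cycle of length $k\,k_\alpha$ comes with $\gcd(m,\alpha)$ companions) matches the paper's argument exactly.
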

		
		In particular,  the reflections $r_{i,j}^{(k)}$ form the conjugacy
		class $C_{2,1^{n-2}|\emptyset|\dots|\emptyset}$ and the reflections
		$\ell_i^k$ the conjugacy classes 
		$C_{1^{n-1}|\emptyset|\dots|1|\emptyset|\dots}$
		where 1 is in position $k$.
		
		We denote by $\mathcal{R}$ the conjugacy class 
		$C_{2,1^{n-2}|\emptyset|\dots|\emptyset}$ and by
		$\mathcal{L}^k$ the conjugacy class 
		$C_{1^{n-1}|\emptyset|\dots|1|\emptyset|\dots}$.

		\section{Hurwitz numbers}
		From now on until the end of the text, the integer $n$ and $m$ are fixed.
		Fix now the colored partition $\overline{\lambda}$ such that
		$|\lambda_0|+\dots+|\lambda_{m-1}|=n$, and let the 
		conjugacy classes $\mathcal{R}$ and $\mathcal{L}^k$ for
		$k=1,\dots,m-1$ as defined above.

		\begin{definition}\label{Df:HurwNumb}
			Denote by $M\bydef n_0+n_1+\dots+n_{m-1}$.
			A sequence of reflections $(\sigma_1 \DT, \sigma_{M})$
			of the group $\G$ is said to have {\em profile} 
			$(\overline{\lambda},n_0,n_1,\dots,n_{m-1})$ if
			$\#\{p \mid \sigma_p \in {\mathcal R}\} = n_0$, $\#\{p \mid
			\sigma_p\in {\mathcal L}^1\} = n_1,\dots,
			\#\{p \mid\sigma_p\in {\mathcal L}^{m-1}\} = n_{m-1}$ and $\sigma_1 \dots
			\sigma_{M} \in C_{\overline{\lambda}}$. 
			The {\em Hurwitz numbers
				for the group $\G$} are $h_{n_0,n_1,\dots,n_{m-1}\overline{\lambda}}
			= \frac{1}{m^nn!}\#\{(\sigma_1 \DT, \sigma_M) \text{ is a sequence of profile
			} (\overline{\lambda},n_0,n_1,\dots,n_{m-1})\}$.
		\end{definition}
		
		Denote  $\mathcal{C}_{\overline{\lambda}} \bydef \frac{1}{\#C_{\overline{\lambda}}}
		\sum_{x \in C_{\overline{\lambda}}} x \in \Complex[\G]$. As, by proposition~\ref{Prop:ConjClass} the conjugacy classes are sets $C_{\overline{\lambda}}$ for various $\overline{\lambda}$, these elements form a basis in the center of the group algebra $Z[\G]$. 
		Consider now a ring of polynomials 
		$\Complex[\mathbf{p}]\bydef\Complex[p^{(0)},p^{(1)},\dots,p^{(m-1)}]$
		where $p^{(i)} = (\frac{p^{(i)}_1}{m}, \frac{p^{(i)}_2}{m^2}, \dots)$\footnote{These extra factors 
			$\frac{1}{m^i}$ which seems unnecessary are present to get nicer formula, especially in corollary \ref{Cor:exp(p1)}.},
		(by abuse of notation we drop the factors in front of the variables and
		write $(p^{(i)}_1, {p^{(i)}_2}, \dots)$ instead) are $m$
		infinite sets of variables. The ring is graded by the total degree
		where one assumes $\deg p^{(i)}_k =k$ for all $k=1,2,\dots$. The linear
		map $\Theta$ defined by
		\begin{equation}\label{Eq:DefIsoB}
			\Theta(\mathcal{C}_{\overline{\lambda}}) = \mathbf{p}_{\overline{\lambda}} 
			\bydef p^{(0)}_{\lambda_0}p^{(1)}_{\lambda_1},\dots p^{(m-1)}_{\lambda_{m-1}}
		\end{equation}
		establishes an isomorphism between
		the vector spaces $Z[\G]$ and the
		homogeneous component $\Complex[p^{(0)},p^{(1)},\dots,p^{(m-1)}]_n$ of total degree $n$.
		
		Now denote by
		\begin{equation}\label{Eq:DefT0}
			\mathcal T_0 := \frac1m \sum_{\substack{1 \le i < j \le mn\\
					0\leq k\leq m-1} }
			r_{ij}^{(k)} = \#\mathcal{R} \cdot \mathcal{C}_{1^{n-2}2|\emptyset|\dots\emptyset}
		\end{equation}
		and for all $k=1,\dots,m-1$
		\begin{equation*}
			\mathcal T_k := \frac1m \sum_{1 \le i \le mn} l_i^k = 
			\#\mathcal{L}^k
			\cdot \mathcal{C}_{{1^{n-1}}|\emptyset|\dots|1|\emptyset|\dots|\emptyset}
		\end{equation*}
		sums of all elements of the conjugacy classes containing reflections, see equations (\ref{Eq:rij}) and (\ref{Eq:li}) for the definitions of $r^{(\alpha)}_{ij}$ and $\ell_i^k$.
		Now $r^{(k)}_{ij} = r^{(k)}_{\tau(i),\tau(j)}=\dots=r^{(k)}_{\tau^{m-1}(i),\tau^{m-1}(j)}$,
		and $l_i^k = l_{\tau(i)}^k=\dots=l_{\tau^{m-1}(i)}^k$, so every
		element is repeated $m$ times in these sums; hence the factor
		$\frac1m$). The elements $\mathcal T_0$ and $\mathcal T_k's$ belong to
		$Z[\G]$, so one can consider linear operators $T_0, T_1,\dots,T_{m-1}: Z[\G] \to
		Z[\G]$ of multiplication by $\mathcal T_0,\dots,\mathcal T_{m-1}$,
		respectively. The operators $T_i$ and $T_j$ pairwise commute for all
		$i,j = 0,\dots,m-1$.
		
		Let now the family of $m$ operators $\CJ_0,\dots,\CJ_{m-1}$ the operators
		making the following diagrams commutative:
		
		\begin{equation}\label{Eq:CJViaGAlg}
			\xymatrix{
				Z[\G] \ar[r]^{\times T_i} \ar[d]^\simeq &
				Z[\G] \ar[d]^\simeq  \\
				\mathbb{C}[\mathbf{p}]_n \ar[r]_{\CJ_i} &
				\mathbb{C}[\mathbf{p}]_n   
			}, \qquad i = 0,\dots,m-1.
		\end{equation}
		
		Let the two colored partition $\overline{\lambda}$ and
		$\overline{\mu}$ such that $|\overline{\lambda}|=
		|\overline{\mu}|=n$. For an element $\sigma\in 
		C_{\overline{\lambda}}$,
		define the multiplicity $ \langle\overline{\lambda}|
		\overline{\mu}\rangle_0$ as the number of reflections
		$r_{i,j}^{(k)}$ (i.e elements of $\mathcal{R}$) such that 
		$\sigma r_{i,j}^{(k)}\in C_{\overline{\mu}}$.
		Similarly define the multiplicity 
		$ \langle\overline{\lambda}|
		\overline{\mu}\rangle_k$ for elements in 
		$\mathcal{L}^k$ and $k=1,\dots,m-1$
		
		The following lemma and theorem are proved
		in the exact same way as in \cite{B-Hurwitz}, we 
		refer to this paper for more details.
		
		\begin{lemma}\label{Lm:Conj}
			Multiplicities do not depend on the choice of $\sigma$.
		\end{lemma}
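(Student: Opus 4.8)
The plan is to show that the multiplicity $\langle\overline{\lambda}\mid\overline{\mu}\rangle_k$ is genuinely a class function of $\sigma$, i.e.\ that it depends on $\sigma$ only through its conjugacy class $C_{\overline{\lambda}}$. The key observation is that each set $\mathcal R$ and $\mathcal L^k$ is itself a full conjugacy class of $\G$ (as recorded right after Proposition \ref{Prop:ConjClass}), and hence is stable under conjugation by any element of $\G$. First I would fix two elements $\sigma, \sigma' \in C_{\overline{\lambda}}$ in the same conjugacy class, and write $\sigma' = g\sigma g^{-1}$ for some $g \in \G$, which exists precisely because $C_{\overline{\lambda}}$ is a single conjugacy class.

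Next I would define, for a fixed reflection class (say $\mathcal R$; the cases $\mathcal L^k$ are identical), the counting set
\begin{equation*}
S_\sigma := \{\, \rho \in \mathcal R \mid \sigma\rho \in C_{\overline{\mu}} \,\},
\qquad
\langle\overline{\lambda}\mid\overline{\mu}\rangle_0 = \# S_\sigma.
\end{equation*}
The core of the argument is to exhibit a bijection $S_\sigma \to S_{\sigma'}$. I claim the map $\rho \mapsto g\rho g^{-1}$ does the job. Indeed, if $\rho \in \mathcal R$ then $g\rho g^{-1} \in \mathcal R$ because $\mathcal R$ is a conjugacy class; and
\begin{equation*}
\sigma' (g\rho g^{-1}) = (g\sigma g^{-1})(g\rho g^{-1}) = g(\sigma\rho)g^{-1},
\end{equation*}
so $\sigma\rho \in C_{\overline{\mu}}$ if and only if $g(\sigma\rho)g^{-1} \in C_{\overline{\mu}}$, again using that $C_{\overline{\mu}}$ is a conjugacy class and hence conjugation-invariant. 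Therefore $\rho \in S_\sigma$ if and only if $g\rho g^{-1} \in S_{\sigma'}$. The map is clearly injective (conjugation by $g$ is a bijection of $\G$), and by symmetry conjugation by $g^{-1}$ furnishes the inverse, so it is a bijection. Counting cardinalities gives $\#S_\sigma = \#S_{\sigma'}$, which is exactly the claim.

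I do not anticipate a serious obstacle here: the statement is essentially the assertion that the structure constants of the center $Z[\G]$ are well defined, and everything reduces to the conjugation-invariance of the classes $\mathcal R$, $\mathcal L^k$, and $C_{\overline{\mu}}$. The only point requiring a little care is to confirm that $\mathcal R$ and each $\mathcal L^k$ are \emph{single} conjugacy classes rather than unions of several — but this was established in the discussion following Proposition \ref{Prop:ConjClass}, where they were identified with $C_{2,1^{n-2}\mid\emptyset\mid\dots\mid\emptyset}$ and $C_{1^{n-1}\mid\emptyset\mid\dots\mid 1\mid\dots}$ respectively. The identical argument applies verbatim with $\mathcal R$ replaced by $\mathcal L^k$, so all $m$ multiplicities $\langle\overline{\lambda}\mid\overline{\mu}\rangle_k$ are simultaneously shown to be independent of the representative $\sigma$.
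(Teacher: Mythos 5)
Your proof is correct and is exactly the standard conjugation-bijection argument that the paper (deferring to \cite{B-Hurwitz}) has in mind: conjugating by $g$ with $\sigma'=g\sigma g^{-1}$ gives a bijection between the counting sets because $\mathcal R$, $\mathcal L^k$ and $C_{\overline\mu}$ are all single conjugacy classes, hence conjugation-invariant. No gaps.
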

		
		\begin{theorem}\label{Th:TViaMult}
			$T_i\mathcal{C}_{\overline{\lambda}} = 
			\sum_{\overline{\mu}} \langle \overline{\lambda}
			\mid \overline{\mu}\rangle_i\cdot \mathcal{C}_
			{\overline{\mu}}$ for $i = 0,\dots,m-1$.
		\end{theorem}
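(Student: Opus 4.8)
The plan is to compute the product $\mathcal T_i\mathcal C_{\overline\lambda}$ directly inside the center $Z[\G]$ and read off its coordinates in the class basis $\{\mathcal C_{\overline\mu}\}$. Write $\mathcal K_i$ for the reflection class attached to $\mathcal T_i$, so that $\mathcal K_0 = \mathcal R$ and $\mathcal K_i = \mathcal L^i$ for $i\ge 1$; unwinding the normalization noted after the definition of $\mathcal T_i$ gives simply $\mathcal T_i = \sum_{\rho\in\mathcal K_i}\rho$. Since both $\mathcal T_i$ and $\mathcal C_{\overline\lambda}$ lie in $Z[\G]$, their product does too, and by Proposition~\ref{Prop:ConjClass} it expands uniquely as $\sum_{\overline\mu} c_{\overline\mu}\,\mathcal C_{\overline\mu}$; the whole task is to identify the coefficient $c_{\overline\mu}$ with the multiplicity $\langle\overline\lambda\mid\overline\mu\rangle_i$.

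First I would expand. Using $\mathcal C_{\overline\lambda} = \frac{1}{\#C_{\overline\lambda}}\sum_{\sigma\in C_{\overline\lambda}}\sigma$ together with centrality (which lets me write the two central factors in the order $\sigma\rho$ rather than $\rho\sigma$), one obtains
\begin{equation*}
	\mathcal T_i\mathcal C_{\overline\lambda} = \frac{1}{\#C_{\overline\lambda}}\sum_{\sigma\in C_{\overline\lambda}}\sum_{\rho\in\mathcal K_i}\sigma\rho.
\end{equation*}
Next I would sort the terms of this double sum according to the conjugacy class $C_{\overline\mu}$ containing the product $\sigma\rho$, writing $\mathcal T_i\mathcal C_{\overline\lambda} = \frac{1}{\#C_{\overline\lambda}}\sum_{\overline\mu} S_{\overline\mu}$, where $S_{\overline\mu}$ collects exactly those products $\sigma\rho$ that land in $C_{\overline\mu}$.

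The key point is then that each $S_{\overline\mu}$ is a constant multiple of the class sum $\sum_{x\in C_{\overline\mu}}x = \#C_{\overline\mu}\,\mathcal C_{\overline\mu}$. I would justify this via centrality: as $\mathcal T_i\mathcal C_{\overline\lambda}\in Z[\G]$, its coefficient function is constant on each conjugacy class, and $S_{\overline\mu}$ is just the part supported on $C_{\overline\mu}$, so all its coefficients agree. To pin down that common value I count the terms of $S_{\overline\mu}$ in two ways. If $S_{\overline\mu}=t\sum_{x\in C_{\overline\mu}}x$, it has $t\cdot\#C_{\overline\mu}$ terms. On the other hand, the number of pairs $(\sigma,\rho)$ with $\sigma\in C_{\overline\lambda}$, $\rho\in\mathcal K_i$, and $\sigma\rho\in C_{\overline\mu}$ is, upon summing over $\sigma$ and invoking Lemma~\ref{Lm:Conj} (which makes the inner count independent of the choice of $\sigma$), equal to $\#C_{\overline\lambda}\cdot\langle\overline\lambda\mid\overline\mu\rangle_i$. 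Equating the two counts gives $t = \#C_{\overline\lambda}\,\langle\overline\lambda\mid\overline\mu\rangle_i/\#C_{\overline\mu}$, hence $S_{\overline\mu} = \#C_{\overline\lambda}\,\langle\overline\lambda\mid\overline\mu\rangle_i\,\mathcal C_{\overline\mu}$; the prefactor $1/\#C_{\overline\lambda}$ then cancels to yield precisely $\mathcal T_i\mathcal C_{\overline\lambda} = \sum_{\overline\mu}\langle\overline\lambda\mid\overline\mu\rangle_i\,\mathcal C_{\overline\mu}$.

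The expansion and the term-counting are routine; the one step that demands care is the claim that $S_{\overline\mu}$ is proportional to the class sum. Rather than verifying conjugation-invariance of $S_{\overline\mu}$ by hand, I would lean on the defining property of central elements in a group algebra (constant coefficient on each class), which lets me read $c_{\overline\mu}$ off from a single fixed $x_0\in C_{\overline\mu}$ and then match it to the multiplicity by the counting above. A secondary bookkeeping point is the ordering $\sigma\rho$ versus $\rho\sigma$ and the normalization hidden in $\mathcal T_i$, both harmless once centrality is used to commute the factors into the order appearing in the definition of $\langle\overline\lambda\mid\overline\mu\rangle_i$.
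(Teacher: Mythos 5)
Your argument is correct and is exactly the standard class-algebra computation that the paper has in mind: it gives no proof of its own here, deferring to \cite{B-Hurwitz}, where the analogous statement is established by the same expansion of $\mathcal T_i\mathcal C_{\overline\lambda}$ into terms sorted by target class, the observation that centrality forces each piece to be proportional to a class sum, and the double count of pairs $(\sigma,\rho)$ using Lemma~\ref{Lm:Conj}. Your bookkeeping of the normalizations and of the ordering $\sigma\rho$ versus $\rho\sigma$ is consistent with the paper's definitions of $\mathcal T_i$ and of $\langle\overline\lambda\mid\overline\mu\rangle_i$, so nothing is missing.
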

		\section{Generating function, cut-and-join and explicit formulas}
		\subsection{Generating function} 
		Consider the following generating function for Hurwitz numbers of the
		group $\G$ :
		\begin{equation*}
			\mathcal{H}(\beta_0,\beta_1,\dots,\beta_{m-1},\mathbf{p}) =
			\sum_{n_0,\dots,n_{m-1}}\sum_{\overline{\lambda}}
			\frac{ h_{n_0,\dots,n_{m-1},\overline{\lambda}}}{n_0!n_1!\dots n_{m-1}!}
			p_{\overline{\lambda}}\beta_0^{n_0}\dots\beta^{n_{m-1}}_{m-1}.
		\end{equation*}
		\begin{theorem}\label{Th:CJequ}
			The generating function $\mathcal{H}$ satisfies
			the family of $m$ cut-and-join equations:
			
			\begin{equation}\label{Eq:CJ}
				\pder{\mathcal{H}}{\beta_0} = \CJ_0(\mathcal{H}) \quad ,\dots,
				\quad \pder{\mathcal{H}}{\beta_{m-1}} = \CJ_{m-1}(\mathcal{H})
			\end{equation}
		\end{theorem}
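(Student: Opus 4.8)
The plan is to transport the entire statement into the center $Z[\G]$ of the group algebra through the isomorphism $\Theta$ of (\ref{Eq:DefIsoB}), and there to recognise the generating function as an exponential of the commuting central elements $\mathcal T_0,\dots,\mathcal T_{m-1}$. Since $n$ is fixed and every $p_{\overline\lambda}$ occurring in $\mathcal H$ is homogeneous of degree $n$, the function $\mathcal H$ lies in $\Complex[\mathbf p]_n$, so I may set $\widehat{\mathcal H}:=\Theta^{-1}(\mathcal H)\in Z[\G]$: concretely $\widehat{\mathcal H}$ is obtained from $\mathcal H$ by replacing each $p_{\overline\lambda}$ with the class average $\mathcal C_{\overline\lambda}$. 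The derivative $\partial_{\beta_i}$ acts only on the scalar parameters $\beta_j$ and therefore commutes with $\Theta$, while the defining square (\ref{Eq:CJViaGAlg}) says that $\Theta$ intertwines multiplication by $\mathcal T_i$ (the operator $T_i$) with $\CJ_i$. Hence the asserted equation $\partial_{\beta_i}\mathcal H=\CJ_i(\mathcal H)$ is equivalent to the single group-algebra identity $\partial_{\beta_i}\widehat{\mathcal H}=T_i\widehat{\mathcal H}=\mathcal T_i\cdot\widehat{\mathcal H}$, to be proved for every $i=0,\dots,m-1$.

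The key step is to evaluate $\widehat{\mathcal H}$ in closed form. Writing $\mathcal T_i=\sum_{x\in\mathcal R_i}x$ with $\mathcal R_0=\mathcal R$ and $\mathcal R_k=\mathcal L^k$, the coefficient of a group element $g$ in $\mathcal T_0^{n_0}\cdots\mathcal T_{m-1}^{n_{m-1}}$ counts reflection factorizations of $g$ with $n_0$ factors in $\mathcal R$, $n_k$ factors in $\mathcal L^k$, and prescribed ordering. Because each $\mathcal T_i$ is central in $\Complex[\G]$, the order in which factors of different types are multiplied is immaterial; summing this coefficient over all $g$ in a fixed class $C_{\overline\lambda}$ and dividing by $\#C_{\overline\lambda}$ reproduces, after the normalisation $\tfrac1{m^n n!}=\tfrac1{|\G|}$ of Definition \ref{Df:HurwNumb}, exactly the Hurwitz number. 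In other words $h_{n_0,\dots,n_{m-1},\overline\lambda}$ equals $\tfrac1{|\G|}$ times the coefficient of $\mathcal C_{\overline\lambda}$ in $\mathcal T_0^{n_0}\cdots\mathcal T_{m-1}^{n_{m-1}}$. Feeding this into the definition of $\mathcal H$ and applying $\Theta^{-1}$ gives
\[
\widehat{\mathcal H}=\frac1{|\G|}\sum_{n_0,\dots,n_{m-1}\ge 0}\frac{\beta_0^{n_0}\cdots\beta_{m-1}^{n_{m-1}}}{n_0!\cdots n_{m-1}!}\,\mathcal T_0^{n_0}\cdots\mathcal T_{m-1}^{n_{m-1}},
\]
which, as the $\mathcal T_i$ pairwise commute, is precisely the degree-$n$ part of $\tfrac1{|\G|}\exp\bigl(\beta_0\mathcal T_0+\dots+\beta_{m-1}\mathcal T_{m-1}\bigr)$.

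With $\widehat{\mathcal H}$ in this form the conclusion is immediate: differentiating the exponential and using that the $\mathcal T_j$ commute yields $\partial_{\beta_i}\widehat{\mathcal H}=\mathcal T_i\,\widehat{\mathcal H}=T_i\widehat{\mathcal H}$. Applying $\Theta$ and invoking the commutative square (\ref{Eq:CJViaGAlg}) turns the left-hand side into $\partial_{\beta_i}\mathcal H$ and the right-hand side into $\CJ_i(\mathcal H)$, giving the cut-and-join equation for each $i$. Alternatively one can avoid the exponential and argue coefficientwise: the identity above, together with $T_i\mathcal C_{\overline\mu}=\sum_{\overline\lambda}\langle\overline\mu\mid\overline\lambda\rangle_i\,\mathcal C_{\overline\lambda}$ from Theorem \ref{Th:TViaMult}, reduces the claim to the recursion $h_{\dots,n_i+1,\dots,\overline\lambda}=\sum_{\overline\mu}\langle\overline\mu\mid\overline\lambda\rangle_i\,h_{\dots,n_i,\dots,\overline\mu}$ on the Hurwitz numbers, which is exactly the effect of appending one more reflection of type $i$ to a factorization.

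I expect the only genuine work to be the key identity relating the Hurwitz numbers to the structure constants of the $\mathcal T_i$. The delicate points there are (a) matching the $1/(n_0!\cdots n_{m-1}!)$ weighting of the generating function with the $1/(n_0!\cdots n_{m-1}!)$ produced by expanding the exponential, and (b) justifying that reflections of distinct types may be reordered freely without changing the class-algebra product, which is precisely where the centrality of the sums $\mathcal T_i$ and the pairwise commuting of the operators $T_i$ (guaranteed by Proposition \ref{Prop:ConjClass}) enter. Once this bookkeeping is settled the differentiation is purely formal, so the argument follows the template of \cite{B-Hurwitz} verbatim.
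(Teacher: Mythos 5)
Your argument is correct and is essentially the paper's own proof: the paper likewise passes to the degree-$n$ homogeneous component, identifies $\Theta^{-1}(\mathcal H_n)$ (up to the normalizing factor $m^n n!$) with $\sum \frac{\beta_0^{n_0}\cdots\beta_{m-1}^{n_{m-1}}}{n_0!\cdots n_{m-1}!}\,T_0^{n_0}\cdots T_{m-1}^{n_{m-1}}(e_n)$ by the same ``elementary combinatorial reasoning'' you spell out, differentiates in $\beta_i$ to obtain $T_i\mathcal G_n=\partial_{\beta_i}\mathcal G_n$, and transports the identity through the commutative square \eqref{Eq:CJViaGAlg}. Your exponential packaging and the alternative coefficientwise recursion are cosmetic variants of this; the only slip is the phrase ``dividing by $\#C_{\overline{\lambda}}$'' --- extracting the coefficient of $\mathcal{C}_{\overline{\lambda}}$ amounts to \emph{summing} the coefficients of the elements $g\in C_{\overline{\lambda}}$, since $\mathcal{C}_{\overline{\lambda}}$ already carries the factor $1/\#C_{\overline{\lambda}}$ --- and it does not affect your displayed formula.
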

		
		\begin{proof}
			Fix a positive integer $n$ and denote by $\mathcal{H}_n$ a degree
			$n$ homogeneous component of $\mathcal{H}$. The cut-and-join
			operators preserve the degree, so $\mathcal{H}$ satisfies the
			cut-and-join equations if and only if $\mathcal{H}_n$ does (for each
			$n$).
			
			Let $i$ any integer between $0$ and $m-1$, and let
			\begin{equation*}
				\mathcal{G}_n \bydef \sum_{n_0,\dots n_{m-1} \ge 0}
				\sum_{\overline{\lambda}:\mid\overline{\lambda}\mid = n}
				\frac{m^nn!h_{n_0,\dots n_{m-1},\overline{\lambda}}}
				{n_0!\dots n_{m-1}!} \mathcal{C}_{\overline{\lambda}}
				\beta_0^{n_0} \beta_1^{n_1}\dots\beta_{m-1}^{n_{m-1}} \in\Complex[\G]
			\end{equation*}
			An elementary combinatorial reasoning gives
			\begin{equation*}
				\mathcal{G}_n = \sum_{m\geq 0}
				\frac{\beta_0^{n_0}\beta_1^{n_1}\dots\beta_{m-1}^{n_{m-1}}}
				{n_0!\dots n_{m-1}!}T_0^{n_0}\dots T_{m-1}^{n_{m-1}}(e_n)
			\end{equation*}
			where $e_n \in \G$ is the unit element. Clearly
			\begin{equation*}
				\begin{aligned}
					T_i(\mathcal{G}_n) = \sum_{n_0,\dots n_{m-1} \ge 0}
					\frac{\beta_0^{n_0}\beta_1^{n_1}\dots\beta_{m-1}^{n_{m-1}}}
					{n_0!\dots n_i!\dots n_{m-1}!}T_0^{n_0}\dots(T_i)^{n_i+1}
					\dots T_{m-1}^{n_{m-1}}(e_{n}) =\\ 
					\sum_{\substack{n_0\dots \hat n_i,\dots n_{m-1} \ge
							0\\ n_i\geq 1}} \frac{\beta_0^{n_0}\dots \beta_{i}^{n_i-1}\dots \beta_{{m-1}}^{n_{m-1}}}
					{n_0!\dots (n_i-1)!\dots n_{m-1}} T_0^{n_0}\dots T_i^{n_i}
					\dots T_{m-1}^{n_{m-1}}(e_{n}) =
					\pder{\mathcal{G}_n}{\beta_\alpha}.
				\end{aligned}
			\end{equation*}
			Applying the isomorphism $\Psi$ one
			obtains $\Psi T_i(\mathcal{G}_n) = \Psi(\pder{\mathcal{G}_n}{\beta_\alpha})
			= \pder{}{\beta_\alpha} \Psi(\mathcal{G}_n)$.  One has $\Psi(\mathcal{G}_n)
			= \mathcal{H}_n$, hence $\pder{}{\beta_\alpha} \Psi(\mathcal{G}_n) =
			\pder{\mathcal{H}_n}{\beta_\alpha}$.
			By the
			definition of the cut-and-join operators, $\Psi T_i(\mathcal{G}_n) =
			\CJ_i(\Psi(\mathcal{G}_n)) = \CJ_i({\mathcal{H}_n})$, and
			equalities \eqref{Eq:CJ} follow.
		\end{proof}
		\begin{corollary}\label{Cor:exp(p1)}
			\begin{equation}\label{Eq:GenFunc}
				\mathcal{H}(\beta_0,\dots,\beta_{m-1},\mathbf{p}) =
				e^{\beta_0\CJ_0+\dots+\beta_{m-1}\CJ_{m-1}}e^{p^{(0)}_1}
			\end{equation}
		\end{corollary}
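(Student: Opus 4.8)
The plan is to read the statement as the integrated form of the cut-and-join system \eqref{Eq:CJ} established in Theorem~\ref{Th:CJequ}, so the whole argument is a standard ``commuting flow plus initial condition'' computation. The first step is to record the two structural facts that make the operator exponential legitimate. First, the $\CJ_i$ pairwise commute: by the commutative square \eqref{Eq:CJViaGAlg} each $\CJ_i$ is the transport of the multiplication operator $T_i$ through the fixed linear isomorphism $\Theta$, and the $T_i$ commute because they are multiplications by central elements $\mathcal{T}_i \in Z[\G]$; conjugation by $\Theta$ preserves commutators, so $\CJ_i\CJ_j=\CJ_j\CJ_i$. Second, each $\CJ_i$ preserves total $\mathbf p$-degree (as already noted in the proof of Theorem~\ref{Th:CJequ}), so it restricts to a linear endomorphism of the finite-dimensional space $\mathbb{C}[\mathbf p]_n$; consequently $e^{\beta_0\CJ_0+\dots+\beta_{m-1}\CJ_{m-1}}$ is a genuine (finite-dimensional, hence convergent) matrix exponential acting within each graded piece, and it suffices to prove the identity degree by degree.

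Next I would solve the system. Set $F := e^{\beta_0\CJ_0+\dots+\beta_{m-1}\CJ_{m-1}}\,\mathcal H|_{\beta=0}$. Since the generators are independent of the $\beta_j$ and pairwise commute, differentiation of the exponential gives $\partial_{\beta_i}F=\CJ_i F$ for every $i$, and clearly $F|_{\beta=0}=\mathcal H|_{\beta=0}$. By Theorem~\ref{Th:CJequ} the generating function $\mathcal H$ satisfies the very same equations with the very same value at $\beta=0$. Uniqueness is then immediate: the equations force every mixed derivative at the origin to equal $\CJ_0^{a_0}\cdots\CJ_{m-1}^{a_{m-1}}\mathcal H|_{\beta=0}$, and this expression is unambiguous precisely because the $\CJ_i$ commute, so the order in which the derivatives are taken is irrelevant. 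Matching all Taylor coefficients in $\beta$ shows $\mathcal H=F$, and it only remains to identify $\mathcal H|_{\beta=0}$ with $e^{p^{(0)}_1}$.

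For the initial condition I would set all $\beta_i=0$ in the definition of $\mathcal H$, so that only the terms with $n_0=\dots=n_{m-1}=0$, i.e.\ $M=0$, survive. The unique sequence of length $0$ is the empty one, whose product is the identity $e_n$, and $e_n$ lies in the conjugacy class $C_{1^n|\emptyset|\dots|\emptyset}$ by Proposition~\ref{Prop:ConjClass}. Hence the only colored partition contributing is $\overline\lambda = 1^n|\emptyset|\dots|\emptyset$, with $h_{0,\dots,0,\,1^n|\emptyset|\dots|\emptyset}=\tfrac{1}{m^n n!}$. Passing $p_{1^n|\emptyset|\dots|\emptyset}$ through the rescaled variables of \eqref{Eq:DefIsoB} (whose $m$-power factors are, as the footnote says, chosen for exactly this purpose) converts the degree-$n$ summand into $\tfrac{(p^{(0)}_1)^n}{n!}$, and summing over all $n\ge 0$ yields $\sum_{n\ge 0}\tfrac{(p^{(0)}_1)^n}{n!}=e^{p^{(0)}_1}$, which is the desired value $\mathcal H|_{\beta=0}$.

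The conceptual content is entirely in the commuting-flow step, which is routine once commutativity and degree-preservation are in hand. I expect the only genuinely delicate point to be the constant bookkeeping in the last paragraph: one must verify that the factor $\tfrac{1}{m^n}$ coming from the normalization in Definition~\ref{Df:HurwNumb} is cancelled exactly by the $m$-powers absorbed into the variable convention \eqref{Eq:DefIsoB}, leaving the clean $\tfrac{1}{n!}$ that reassembles into an exponential. I would check this cancellation degree by degree, as it is the one place where the specific choice of normalization is actually used.
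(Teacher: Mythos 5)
Your proposal is correct and follows essentially the same route as the paper: compute the initial condition $\mathcal H(0,\dots,0,\mathbf p)=e^{p^{(0)}_1}$ from Definition~\ref{Df:HurwNumb} (only the empty sequence contributes, landing in $C_{1^n|\emptyset|\dots|\emptyset}$), then integrate the cut-and-join system of Theorem~\ref{Th:CJequ}. You merely make explicit the points the paper leaves implicit --- commutativity of the $\CJ_i$ (inherited from the central multiplication operators $T_i$), degree preservation, and uniqueness of the solution via matching Taylor coefficients in $\beta$ --- which is a faithful filling-in rather than a different argument.
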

		
		\begin{proof}
			It follows from Definition \ref{Df:HurwNumb} that
			$h_{0,\dots,0,\overline{\lambda}} = \frac{1}{m^nn!}$ if $\lambda_0 = 1^n$ and
			$\lambda_i=\emptyset$ for $i= 1,\dots,m-1$, and $h_{0,\dots,0,\overline{\lambda}} = 0$ otherwise.  Thus
			$\mathcal{H}(0,\dots,0,\mathbf{p}) = e^{p^{(0)}_1}$ and \eqref{Eq:GenFunc} follows
			from Theorem \ref{Th:CJequ}.
		\end{proof}
		
		\subsection{Explicit formulas} We provide in this section explicit formulas
		for the cut-and-join and the generating function.
		
		\begin{theorem}\label{Th:CJ}
			The $m$-family of cut-and-join operators for the reflection groups $\G$ is 
			given by
			\begin{align}\label{Eq:CJ_0}
				\mathcal{CJ}_0 =\frac{1}{2} \sum_{\substack{i,j \ge 1\\ 
						\alpha,\gamma \in \Integer/m\Integer }}
				(i+j) p^{(\alpha)}_i p^{(\gamma)}_j \pder{}{p^{(\alpha+\gamma)}_{i+j}}
				+mij p_{i+j}^{(\alpha+\gamma)} 
				\frac{\partial^2}{\partial p^{(\alpha)}_ip^{(\gamma)}_j}
			\end{align}
			
			and
			\begin{equation}\label{Eq:CJ_k}
				\mathcal{CJ}_k = \sum_{\substack{i\geq1 \\
						\alpha \in \Integer/m\Integer}} \left(i p_i^{(\alpha+k)}
				\frac{\partial}{\partial
					p^{(\alpha)}_i} \right) 
				\qquad \text{for } k = 1,\dots,m-1
			\end{equation}
			
		\end{theorem}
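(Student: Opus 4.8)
The plan is to compute each operator through the commutative diagram \eqref{Eq:CJViaGAlg}, i.e. as $\mathcal{CJ}_i = \Theta\circ T_i\circ\Theta^{-1}$, by first determining the action of $T_i$ on the basis $\{\mathcal{C}_{\overline\lambda}\}$ of $Z[\G]$ and then transporting it through $\Theta$. By Theorem \ref{Th:TViaMult} this action is encoded in the multiplicities $\langle\overline\lambda\mid\overline\mu\rangle_i$, and by Lemma \ref{Lm:Conj} each such multiplicity may be computed by fixing a single convenient $\sigma\in C_{\overline\lambda}$ and counting the reflections of the relevant type whose product with $\sigma$ lands in $C_{\overline\mu}$. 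I would carry out this count in the wreath-product model $\Integer/m\Integer\wr S_n$, writing $\sigma = [u;g]$ and using that, by Proposition \ref{Prop:ConjClass}, the class of $[u;g]$ is read off from the cycles of $u$ together with their cycle-products (the color $\alpha$ of a cycle being the sum of the corresponding entries of $g$ in $\Integer/m\Integer$). In this model the relevant reflections are $\ell_i^k = [\mathrm{id}; k e_i]$ and $r_{ij}^{(k)} = [(i\,j); k(e_i - e_j)]$, and the wreath-product multiplication rule reduces everything to the elementary effect of these on the cycle-and-color data.

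For the operators $\mathcal{CJ}_k$ with $k\geq 1$ the computation is short: $\sigma\ell_i^k = [u; g + k e_i]$ leaves $u$ unchanged and adds $k$ to the single entry $g_i$, so the unique cycle of $u$ containing $i$ keeps its length $\ell$ but has its color shifted from $\alpha$ to $\alpha+k$. Hence the reflections $\ell_i^k$ sending $\sigma$ into a class where one part of size $\ell$ has migrated from color $\alpha$ to color $\alpha+k$ are exactly the $\ell$ indices $i$ lying on that cycle. Transporting through $\Theta$, this is precisely the operator that strips a part of size $i$ and color $\alpha$ and reattaches it at color $\alpha+k$ with weight equal to the part size, namely $\sum_{i,\alpha} i\,p^{(\alpha+k)}_i\,\partial/\partial p^{(\alpha)}_i$, which is \eqref{Eq:CJ_k}.

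The heart of the argument is $\mathcal{CJ}_0$. Here $\sigma r_{ij}^{(k)} = [u(i\,j);\,(i\,j)(g) + k(e_i - e_j)]$, so multiplying $u$ by the transposition $(i\,j)$ either joins the two cycles containing $i$ and $j$ or cuts a single cycle in two, exactly as in the classical cut-and-join; the genuinely new feature is the colors. Since $i,j$ both lie in the affected cycle(s), the perturbation $k(e_i-e_j)$ contributes $k-k=0$ to the total cycle-product, so the total color is preserved: a join of colors $\alpha,\gamma$ produces color $\alpha+\gamma$, and a cut of color $\beta$ produces two colors summing to $\beta$. The decisive observation is where the endpoints $i,j$ end up. In a join they land in the \emph{same} resulting cycle, so the $\pm k$ cancel inside it and the outcome is independent of $k$; all $m$ values of $k$ therefore produce the identical class, which is the origin of the factor $m$ in the join term. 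In a cut the endpoints land in \emph{different} resulting cycles, so the piece containing $i$ picks up $+k$ and the piece containing $j$ picks up $-k$; as $k$ ranges over $\Integer/m\Integer$ the color pair $(\alpha,\gamma)$ with $\alpha+\gamma=\beta$ is swept out bijectively, so no multiplicity $m$ appears and instead the whole color sum is produced. Combining this with the classical count of transpositions realizing a given split of cycle lengths (which yields the weight $i+j$ for a split and is reconciled by the prefactor $\tfrac12$ exactly as in the usual symmetric-group cut-and-join) gives the two terms of \eqref{Eq:CJ_0}: join weight $m\,ij$ (from the $m$ choices of $k$ and the $\ell_1\ell_2=ij$ choices of endpoints) and cut weight $i+j$.

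Finally I would check that the chosen normalizations do not disturb these coefficients. Passing between $\Complex[\mathbf p]$ and $Z[\G]$ through $\Theta$, the scaling $p^{(\alpha)}_k\mapsto p^{(\alpha)}_k/m^k$ turns each $\partial/\partial p^{(\alpha)}_k$ into $m^k\,\partial/\partial p^{(\alpha)}_k$, and in every monomial of \eqref{Eq:CJ_0}--\eqref{Eq:CJ_k} the powers of $m$ carried by the created variables cancel exactly against those from the derivatives (for instance $m^{-(i+j)}$ from $p^{(\alpha+\gamma)}_{i+j}$ against $m^{i}m^{j}$ from the two derivatives in the join term). Thus the factors $1/m^k$ are transparent for the cut-and-join operators---they matter only later, for the exponential form of the generating function---and the only true power of $m$ is the combinatorial one in the join term. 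The step I expect to require the most care is the cut case of $\mathcal{CJ}_0$: verifying that for fixed endpoint positions the map $k\mapsto(\alpha,\gamma)$ is a bijection onto $\{(\alpha,\gamma):\alpha+\gamma=\beta\}$, and reconciling the endpoint count with the classical weight $i+j$ including the diagonal contribution $i=j$ (equivalently $\alpha=\gamma$), where the prefactor $\tfrac12$ and the second derivative must be handled with the usual care.
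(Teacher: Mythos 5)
Your proposal is correct and follows the same overall strategy as the paper: both reduce the theorem to computing the multiplicities $\langle\overline{\lambda}\mid\overline{\mu}\rangle_i$ of Theorem \ref{Th:TViaMult}, using Lemma \ref{Lm:Conj} to fix a single representative $\sigma$, and then transport the answer through $\Theta$. Where you differ is in the model used for the combinatorial core. The paper works inside the embedding $\G\subset S_{mn}$: it multiplies a product of a $\beta_\alpha$- and a $\beta_\gamma$-cycle by $\rho_{a_1,b_1}$, observes that the result is a $\beta_{\alpha+\gamma}$-cycle, and obtains the coefficients by counting admissible positions of $a_1$ and $b_1$ among the $mn$ points and then dividing out the $m$-fold overcounting coming from $\tau$-invariance ($\rho_{a_1,b_1}=\rho_{\tau(a_1),\tau(b_1)}=\cdots$). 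You instead work in $\Integer/m\Integer\wr S_n$ with the $[u;g]$ notation and track cycle-products. The two pictures are identified in Section 1.2, so your route is legitimate, and it makes the provenance of the coefficients more transparent: the factor $m$ in the join term is visibly the collapse of the $m$ twists $k$ onto a single target class (the $\pm k$ cancel inside the joined cycle), while in the cut term the $m$ twists sweep bijectively through the $m$ decompositions $\alpha+\gamma=\beta$, so no factor $m$ survives and the sum over colors appears instead; the paper obtains the same numbers by position-counting without isolating this mechanism. Likewise your count for $\mathcal{CJ}_k$ avoids the paper's division by $m$, since the $\ell_i^k$ are already distinct elements of the wreath product. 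Your closing check that the normalization $p_k^{(\alpha)}\mapsto p_k^{(\alpha)}/m^k$ is invisible to both operators is correct and worth recording explicitly (the paper leaves it implicit). One small imprecision: in the cut case the diagonal on which the two resulting parts are indistinguishable is $i=j$ \emph{and} $\alpha=\gamma$ simultaneously, not ``$i=j$, equivalently $\alpha=\gamma$''; this does not affect the final formula, which absorbs the diagonal through the overall prefactor $\tfrac12$ and the symmetric sum in the standard way.
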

		\begin{remark}
			The operator $\mathcal{CJ}_0$ has $2m^2$ terms, while 
			$\mathcal{CJ}_k$ has $m$ terms for each $k$.
		\end{remark}
		
		\begin{example}
			For the wreath product $\Integer/2\Integer \wr S_n$, $\alpha$ and $\gamma$ can be
			equal only to $0$ or $1$. Let $p^{(0)}_i\bydef p_i$ and $p^{(1)}_i\bydef q_i$.
			Thus for $\mathcal{CJ}_0$ one gets:
			\begin{align*}
				\mathcal{CJ}_0 =&\frac{1}{2} \sum_{\substack{i,j \ge 1}}
				(i+j)\left( p_i p_j \pder{}{p_{i+j}} +p_i q_j \pder{}{q_{i+j}}+q_i p_j \pder{}{q_{i+j}}+
				q_i q_j \pder{}{p_{i+j}}\right)\\
				&+2ij\left( p_{i+j} \frac{\partial^2}{\partial p_i p_j}+
				p_{i+j} \frac{\partial^2}{\partial q_i q_j}+q_{i+j} \frac{\partial^2}{\partial q_i p_j}+
				q_{i+j} \frac{\partial^2}{\partial p_i q_j}\right)
			\end{align*} 
			which after simplification gives 
			\begin{multline}
				\CJ_0 = \sum_{i,j=1}^\infty \biggl( (i+j) p_iq_j \pder{}{q_{i+j}} +
				2ij q_{i+j} \pdertwo{}{p_i}{q_j} + ijp_{i+j} \pdertwo{}{q_i}{q_j}\\
				+ \frac{1}{2} (i+j) q_iq_j \pder{}{p_{i+j}} + \frac{1}{2} (i+j)
				p_ip_j \pder{}{p_{i+j}} + ij p_{i+j} \pdertwo{}{p_i}{p_j}\biggr)
			\end{multline}
			and we recover the result from \cite{B-Hurwitz}.
			
			Similarly for $\mathcal{CJ}_1$ we get 
			\begin{equation}\label{Eq:CJ_2}
				\CJ_1 = \sum_{i=1}^\infty \left(i p_i\frac{\partial}{\partial
					q_i} + i q_i\frac{\partial}{\partial p_i}\right)
			\end{equation}
			which also agrees with \cite{B-Hurwitz}
		\end{example}
		
		To prove theorem \ref{Th:CJ} we will explicitly compute the coefficients 
		$\langle \overline{\lambda}\mid \overline{\mu}\rangle_\nu$
		for all possible $\overline{\lambda}$, $\overline{\mu}$ and 
		$\nu=0,\dots,m-1$.
		
		Let $\sigma \in S_n$, and $1\leq a<b\leq n$. The cyclic structure 
		of the product $\sigma(a,b)$ depends on the position of $a$ and $b$: 
		if $a$ and $b$ are in the same cycle, say $\sigma_1$, then $\sigma_1$ is
		split, we call this a cut.
		If $a$ and $b$ are in different cycles say $\sigma_1$ and $\sigma_2$
		respectively, then these two cycles are joined.
		
		Let now $\sigma\in \G$ and $\rho$ being one of the following
		types of reflections (where elements are modulo $(m-1)n$):
		\begin{itemize}
			\item  $\rho=r_{a,b}=(a,b)(\tau(a),\tau(b))\dots
			(\tau^{m-1}(a),\tau^{m-1}(b))$ for $1\leq a,b\leq (m-1)n$
			
			\item $\rho = \ell_a^k = (a,\tau(a),\tau^{2}(a),\dots)^k$
			for $1\leq a \leq (m-1)n$ and for any $k =1,\dots, m-1$
		\end{itemize}
		The structure of $\sigma'=\rho\sigma $ depends on the structure of $\sigma$, i.e
		which $\beta_\alpha$-cycle $\sigma$ is, and the positions of $a$ and $b$.
		
		\subsubsection{The operators $\mathcal{CJ}_0$}
		
		Let $\alpha$ and $\gamma$ such that  $0\leq \alpha,\gamma \leq m-1$ 
		and let the permutation $\sigma$ with cycle decomposition
		the product of a $\beta_\alpha-$cycle and a $\beta_\gamma$-cycle 
		where the $\beta_\alpha$-cycle is:
		
		$(a_1,\dots a_{\ell_1},\tau^\alpha(a_1),\dots,\tau^\alpha(a_{\ell_1}),\dots)(\tau(a_1),
		\dots,\tau(a_{\ell_1}),\tau^{\alpha+1}(a_1),\dots,\tau^{\alpha+1}(a_{\ell_1}),\dots)$ 
		
		and the $\beta_\gamma$-cycle is:
		
		$(b_1,\dots b_{\ell_2},\tau^\gamma(b_1),\dots,\tau^\gamma(b_{\ell_2}),\dots)(\tau(b_1),
		\dots,\tau(b_{\ell_2}),\tau^{\gamma+1}(b_1),\dots,\tau^{\gamma+1}(b_{\ell_2}),\dots)$ 
		
		Let also the reflection 
		$\rho_{a_1,b_1} = (a_1,b_1)(\tau(a_1),\tau(b_1))\dots
		(\tau^{m-1}(a_1),\tau^{m-1}(b_1))$ where $1\leq a_1,b_1\leq (m-1)n$.
		
		A straightforward computation shows that
		
		%
		$\rho\sigma$ is a $\beta_{\alpha+\gamma}-$cycle.
		Indeed
		
		\begin{equation}
			\begin{aligned}
				&\sigma\rho(a_{\ell_1})=\tau^\alpha(b_1)\\
				&\sigma\rho(\tau^\alpha(b_1))=\tau^\alpha(b_2)\\
				&\vdots\\
				&\sigma\rho(\tau^\alpha(b_{\ell_2}))=
				(\tau^{\gamma+\alpha}(a_1))\\
			\end{aligned}
		\end{equation}
		
		In other words, multiplying by $\rho$ a product of a $\beta_\alpha$ with a
		$\beta_\gamma$-cycles
		gives a $\beta_{\alpha+\gamma}$-cycle, we call this a \textit{join}. 
		Furthermore $\rho$ being an involution,
		multiplying the $\beta_{\alpha+\gamma}$-cycle obtained 
		once more by $\rho$, gives back the product of $\beta_\alpha\beta_\gamma$-cycles,
		we call this a \textit{cut}. 
		There are $m^2$ possible joins and $m^2$ possible cuts counted
		without multiplicities, so $2m^2$ cases in total.
		This being true for any $\alpha$, $\beta$ and $\alpha+\beta$, these are
		all the possible cases for the multiplication by a reflection 
		$\rho = (a_1,b_1)(\tau(a_1),\tau(b_1))\dots
		(\tau^{m-1}(a_1),\tau^{m-1}(b_1))$. 
		
		We now compute the multiplicities 
		$\langle \overline{\lambda}\mid \overline{\mu}\rangle_0$,
		which will prove the result for $\mathcal{CJ}_0$.
		Let $\sigma\in C_{\overline{\lambda}}$, where
		$\overline{\lambda}=(\lambda_0,\dots,\lambda_{m-1})$ and
		$\lambda_\alpha= 1^{c_{1\alpha}}\dots n^{c_{n\alpha}}$ that is for every
		$k=1,\dots n$ the element $\sigma$ contains $c_k$ $\beta_\alpha$-cycles
		of length $k\times \frac{m}{\gcd(m,\alpha)}$.
		In the same way define $\sigma\rho \in C_{\overline{\mu}}$.
		Calculate $\langle \overline{\lambda}\mid \overline{\mu}\rangle_0$
		for all $\overline{\mu}=(\mu_0,\dots,\mu_{m-1})$ where
		$\mu_\gamma = 1^{d_{1\gamma}}\dots n^{d_{n\gamma}}$.

		{\def \labelenumi {Case \theenumi}
			
			\begin{enumerate}  
				
				\item\label{case:Cut} Cut: Let a $\beta_{\alpha+\gamma}$-cycle of length 
				$(i+j)\times \frac{m}{\gcd(m,\alpha+\gamma)}$.
				The total possible numbers of
				positions for $a_1$ is the total number of elements in all
				the $\beta_{\alpha+\gamma}$-cycle of length 
				$(i+j)\times \frac{m}{\gcd(m,\alpha+\gamma)}$, that is
				$(i+j)\times \gcd(m,\alpha+\gamma)\times
				\frac{m}{\gcd(m,\alpha+\gamma)} c_{i+j}=
				m(i+j)c_{i+j}$. 
				Since we know the length of the product of 
				$\beta_\alpha$ with the $\beta_\gamma$ of $\rho\sigma$,
				the position of $b_1$ as well as the positions of 
				$\tau^\nu(a_1)$ and $\tau^\nu(b_1)$ for $\nu =1,\dots m-1$
				is unique once the position of $a_1$ is chosen.
				Doing like this, one counts every reflection $
				\rho_{a_1,b_1}$ $m$ times, we further
				divide by two as $\rho_{a_1,b_1}=\rho_{b_1,a_1}$,
				and the multiplicity is $ \frac{1}{2}(i+j)c_{i+j}$
				\medskip
				
				\item \label{case:Join} Join: Let a $\beta_{\alpha}$-cycle of length 
				$i\times \frac{m}{\gcd(m,\alpha)}$ and a $\beta_{\gamma}$-cycle of length 
				$j\times \frac{m}{\gcd(m,\gamma)}$.
				The total possible numbers of
				positions for $a_1$ is the total number of elements in all
				the $\beta_{\alpha}$-cycle of length 
				$i\times \frac{m}{\gcd(m,\alpha)}$, that is
				$i\times \gcd(m,\alpha)\times
				\frac{m}{\gcd(m,\alpha)} c_{i}=
				mic_{i}$. Similarly for $b_1$ we get $mjc_{j}$
				total possible number of positions.
				For the same reason as above we need to divide by 
				$2m$ to not over count $\rho_{a_1,b_1}$. We get that the multiplicity is 
				$mijc_ic_j$
				
			\end{enumerate}
		}
		
		The explicit formula of $\mathcal{CJ}_0$ follows from Theorem \ref{Th:TViaMult}
		as \begin{align*}
			\mathcal{CJ}_0p_{\overline{\lambda}}=
			\sum_{\overline{\mu}} \langle \overline{\lambda}
			\mid \overline{\mu}\rangle_0p_{\overline{\mu}}
		\end{align*}
		
		For the cut: the monomial $p_{\overline{\lambda}}$ contains $p_{i+j}^{(\alpha+\beta)c_{i+j}}$.
		The exponent of $p_{i+j}^{(\alpha+\beta)c_{i+j}}$ in the monomial $p_{\overline{\mu}}$ is decreased
		by one, and the exponent of $p_{i}^{(\alpha)}$ and $p_{j}^{(\beta)}$ are increased by one.
		We will have $p^{(\alpha)}_i p^{(\gamma)}_j \pder{}{p^{(\alpha+\gamma)}_{i+j}}$
		for the cut, and to have the correct multiplicity add the coefficient
		$\frac{i+j}{2}$ before. 
		The join case is similar, and this finishes to proof for $\CJ_0$.

		\subsubsection{The operators $\mathcal{CJ}_k$}\label{MultByl}
		
		The reasoning to get the explicit formulas for $\mathcal{CJ}_k$ is 
		similar to the one for $\CJ_0$, but looking instead at the
		product $\sigma\rho$ where $\rho=(a_1,\tau(a_1),\dots \tau^{m-1}(a_1))^k$
		for $k=1,\dots,m-1$.
		\medskip
		
		Once more let $\sigma$ be the $\beta_\alpha$-cycle:
		
		\noindent$(a_1,\dots a_{\ell_1},\tau^\alpha(a_1),\dots,
		\tau^\alpha(a_{\ell_1}),\dots)(\tau(a_1),
		\dots,\tau(a_{\ell_1}),\tau^{\alpha+1}(a_1),\dots,
		\tau^{\alpha+1}(a_{\ell_1}),\dots)$.
		Let also the reflection $\rho$:
		
		$\rho = \ell_{a_1}^\nu = (a_1,\tau(a_1)\dots,\tau^{m-1}(a_1))^\nu$
		for some $\nu=1,\dots,m-1$.
		
		A straightforward computation shows that  
		\noindent$\sigma\rho$ 
		is a $\beta_{\alpha+\gamma}-$cycle.
		
		We now compute the multiplicities 
		$\langle \overline{\lambda}\mid \overline{\mu}\rangle_\gamma$,
		for $\gamma = 1,\dots, m-1$
		which will end the proof of theorem \ref{Th:CJ}.
		As above, let $\sigma\in C_{\overline{\lambda}}$, where
		$\overline{\lambda}=(\lambda_0,\dots,\lambda_{m-1})$ and
		$\lambda_\alpha= 1^{c_{1\alpha}}\dots n^{c_{n\alpha}}$ that is for every
		$k=1,\dots, n$ the element $\sigma$ contains $c_k$ $\beta_\alpha$-cycles
		of length $k\times \frac{m}{\gcd(m,\alpha)}$.
		In the same way define $\sigma\rho \in C_{\overline{\mu}}$.
		Calculate $\langle \overline{\lambda}\mid \overline{\mu}\rangle_\nu$
		for all $\overline{\mu}=(\mu_0,\dots,\mu_{m-1})$ where
		$\mu_\gamma = 1^{d_{1\gamma}}\dots n^{d_{n\gamma}}$ and $\nu= 1,\dots,m-1$.
		
		Let a $\beta_{\alpha}$-cycle of length 
		$i\times \frac{m}{\gcd(m,\alpha)}$.
		The total possible numbers of
		positions for $a_1$ is the total number of elements in all
		the $\beta_{\alpha}$-cycle of length 
		$i\times \frac{m}{\gcd(m,\alpha)}$, that is
		$i\times \gcd(m,\alpha)\times
		\frac{m}{\gcd(m,\alpha)} c_{i}=
		mic_{i}$. We need to divide the multiplicity by $m$
		to not over count the reflection
		$\ell^\nu_{a_1}$ as
		$\ell^\nu_{a_1}=\ell^\nu_{\tau(a_1)}=\dots=
		\ell^\nu_{\tau^{m-1}(a_1)}$, and finally the multiplicity
		is $ic_i$
		
		As done for $\CJ_0$, the monomial $p_{\overline{\lambda}}$ 
		contains $p_{i}^{(\alpha)c_{i}}$.
		The exponent of $p_{i+j}^{(\alpha)c_{i}}$ in the monomial
		$p_{\overline{\mu}}$ is decreased by one, and the exponent 
		of $p_{i}^{(\alpha+k)}$ is increased by one.
		We finally get $p^{(\alpha+k)}_i\pder{}{p^{(\alpha)}_{i}}$,
		and to have the correct multiplicity add the coefficient
		$i$ before and theorem \ref{Th:CJ} is proved. 
		
		\subsection{Change of variables} We show in this section,
		that after a suitable change of variable, the operators in (\ref{Eq:CJ_0})
		and in (\ref{Eq:CJ_k}) reduce to the classical cut-and-join
		
		denote by \begin{equation}\label{Eq:CJA}
			\mathcal{CJ} \bydef \frac{1}{2}\sum_{i,j=1}^\infty \biggl( ijp_{i+j}\pdertwo{}{p_i}{p_j} +
			(i+j) p_ip_j \pder{}{p_{i+j}}\biggr)
		\end{equation}
		and Euler fields
		\begin{equation}\label{Eq:Euler}
			E\bydef\sum_{i=1}^\infty ip_i \pder{}{p_i},
		\end{equation}
		respectively.
		
		\begin{proposition}\label{Prop:CJinU}
			Let $\xi^k\bydef \exp(\frac{2ik\pi}{m})$. Let now the 
			change of variable 
			\begin{align*}
				&p^{(0)}_i= u^{(0)}_i+u^{(1)}_i+\dots+u^{(m-1)}_i\\
				&p^{(1)}_i= u^{(0)}_i+\xi u^{(1)}_i+\dots+\xi^{m-1}u^{(m-1)}_i\\
				&\vdots\\
				&p^{(m-1)}_i= u^{(0)}_i+\xi^{m-1} u^{(1)}_i+\dots+\xi u^{(m-1)}_i.
			\end{align*}
			Then 
			\begin{align*}
				\mathcal{CJ}_0 &= m(\CJ_{u^{(0)}} +\dots+ \CJ_{u^{(m-1)}}),\\
				\mathcal{CJ}_k &=  \sum_{\alpha=0}^{m-1}	\xi^{k\alpha}E_{u^{(\alpha)}}
				\bydef \mathcal{E}_{k}
				\qquad k = 1,\dots,m-1
			\end{align*}
			
			\noindent where by $\CJ_{u^{(\alpha)}}$ we denote the operator \eqref{Eq:CJA}
			with $u_i^{(\alpha)}$ substituted for $p_i$, and similarly, $E_{u^{(\alpha)}}$.
		\end{proposition}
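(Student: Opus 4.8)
The plan is to recognize the stated substitution as the discrete Fourier transform in the ``color'' index $\alpha \in \Integer/m\Integer$, and to exploit the fact that both $\CJ_0$ and $\CJ_k$ are \emph{convolutions} in that index, which such a transform diagonalizes. Concretely, the substitution reads $p^{(\beta)}_i = \sum_{\alpha=0}^{m-1} \xi^{\beta\alpha} u^{(\alpha)}_i$, a transform acting only on the color index and diagonal in the length index $i$. Its inverse is $u^{(\alpha)}_i = \frac1m \sum_{\beta=0}^{m-1}\xi^{-\alpha\beta} p^{(\beta)}_i$, so by the chain rule, and because variables with distinct length indices are independent, the partial derivatives transform by
\begin{equation*}
\pder{}{p^{(\beta)}_i} = \frac1m \sum_{\alpha=0}^{m-1} \xi^{-\alpha\beta}\pder{}{u^{(\alpha)}_i}.
\end{equation*}
The single identity driving every collapse is the orthogonality of roots of unity, $\sum_{\alpha=0}^{m-1}\xi^{\alpha \ell} = m$ when $\ell \equiv 0 \pmod m$ and $0$ otherwise.

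I would treat $\CJ_k$ first as the cleaner warm-up. Substituting $p^{(\alpha+k)}_i = \sum_{c}\xi^{(\alpha+k)c}u^{(c)}_i$ and the derivative rule into $\CJ_k = \sum_{i,\alpha} i\, p^{(\alpha+k)}_i \pder{}{p^{(\alpha)}_i}$, the sum over $\alpha$ produces $\sum_\alpha \xi^{\alpha(c-d)} = m\,\delta_{c,d}$, forcing the two color indices coming from the variable and from the derivative to coincide. What survives is $\sum_c \xi^{kc}\sum_i i\, u^{(c)}_i \pder{}{u^{(c)}_i} = \sum_{\alpha}\xi^{k\alpha} E_{u^{(\alpha)}} = \mathcal E_k$, exactly as claimed, with the prefactor $\frac1m$ cancelling the orthogonality factor $m$.

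The genuinely fussy computation is $\CJ_0$, which I would split into its join (first-order) and cut (second-order) pieces. For the join term $\frac12\sum (i+j)\, p^{(\alpha)}_i p^{(\gamma)}_j \pder{}{p^{(\alpha+\gamma)}_{i+j}}$, the two variables give independent forward transforms (indices $a,b$) and the derivative an inverse transform (index $c$), and $\sum_{\alpha,\gamma}\xi^{\alpha(a-c)+\gamma(b-c)} = m^2\,\delta_{a,c}\delta_{b,c}$ forces $a=b=c$; after $\frac1m$ meets $m^2$ one is left with $\frac m2 \sum_a\sum_{i,j}(i+j)\, u^{(a)}_i u^{(a)}_j \pder{}{u^{(a)}_{i+j}}$. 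The cut term $\frac12\sum mij\, p^{(\alpha+\gamma)}_{i+j}\pdertwo{}{p^{(\alpha)}_i}{p^{(\gamma)}_j}$ is the most delicate: the variable contributes a forward transform in the \emph{combined} index $\alpha+\gamma$ while each of the two derivatives contributes an inverse transform, so $\frac1{m^2}\sum_{\alpha,\gamma}\xi^{(\alpha+\gamma)c-\alpha a-\gamma b} = \delta_{a,c}\delta_{b,c}$, again pinning all three colors together and yielding $\frac m2\sum_a\sum_{i,j} ij\, u^{(a)}_{i+j}\pdertwo{}{u^{(a)}_i}{u^{(a)}_j}$. Collecting both pieces gives $\CJ_0 = m\sum_{a=0}^{m-1}\CJ_{u^{(a)}}$, matching the classical cut-and-join \eqref{Eq:CJA} in each color block.

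The main obstacle is purely bookkeeping: keeping straight which color indices sit in the forward versus the inverse transforms, and verifying that in the second-order cut term the single combined exponent $\alpha+\gamma$ on the variable splits correctly against the two separate derivative exponents, so that both root-of-unity sums collapse to the \emph{same} diagonal $a=b=c$ rather than to a partial diagonal, together with a careful tally of the powers of $m$. Once orthogonality forces full coincidence of the colors, the operator manifestly decouples into $m$ identical classical cut-and-joins (each scaled by $m$) and a Fourier-weighted sum of Euler fields, with no cross terms between distinct $u^{(\alpha)}$ blocks.
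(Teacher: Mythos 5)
Your proposal is correct and takes essentially the same route as the paper's own proof: both apply the chain rule $\pder{}{p^{(\beta)}_i} = \frac1m\sum_{\alpha}\xi^{-\alpha\beta}\pder{}{u^{(\alpha)}_i}$, substitute into the explicit formulas of Theorem \ref{Th:CJ}, and collapse the color sums by orthogonality of roots of unity, which pins all color indices to the diagonal and produces the stated powers of $m$. Your framing of the substitution as a discrete Fourier transform diagonalizing a convolution in the color index is a pleasant conceptual gloss, but the underlying computation is identical to the paper's.
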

		
		\begin{proof}
			First for $\mathcal{CJ}_0$. By the chain rule
			\begin{equation}\label{Eq:ChangeVar}
				\frac{\partial}{\partial p^{(\alpha)}_i} =
				\frac{1}{m}\sum_{\nu=0}^{m-1}\xi^{-\nu\alpha}\frac{\partial}{\partial u^{(\nu)}_i}.
			\end{equation}
			Then plugging (\ref{Eq:ChangeVar}) into (\ref{Eq:CJ}) gives
			\begin{align}\label{Eq:CJ_0NewVar}
				\mathcal{CJ}_0 =\frac{1}{2m} \sum_{\substack{i,j \ge 1\\ 
						\alpha,\gamma\in \Integer/m\Integer\\
						\nu_1,\nu_2,\nu_3 \in \Integer/m\Integer
				}}
				\xi^{\alpha\nu_1+\beta\nu_2-(\alpha+\beta)\nu_3}
				(i+j) u^{(\nu_1)}_i u^{(\nu_2)}_j \pder{}{u^{(\nu_3)}_{i+j}} \\ 
				+\xi^{(\alpha+\beta)\nu_1 -\alpha\nu_2-\beta\nu_3}mij u_{i+j}^{(\nu_1)} 
				\frac{\partial^2}{\partial u^{(\nu_2)}_iu^{(\nu_3)}_j}
			\end{align}
			We want to express the coefficients in front of the monomials  
			$(i+j) u^{(\nu_1)}_i u^{(\nu_2)}_j \pder{}{u^{(\nu_3)}_{i+j}}$ and 
			$mij u_{i+j}^{(\nu_1)} 
			\frac{\partial^2}{\partial u^{(\nu_2)}_iu^{(\nu_3)}_j}$.
			A straightforward computation gives:

			\begin{equation*}
				\sum_{\alpha,\gamma\in \Integer/m\Integer\\}
				\xi^{\alpha\nu_1+\beta\nu_2-(\alpha+\beta)\nu_3} = 
				\sum_{\alpha,\gamma\in \Integer/m\Integer\\}
				\xi^{\alpha(\nu_1-\nu_2)+\beta(\nu_1-\nu_3)} =
				\begin{cases}
					m^2 \ \text{if} \ \nu_1=\nu_2=\nu_3\\ 
					0\ \text{otherwise.}
				\end{cases}
			\end{equation*}
			
			Plugging this last equality into (\ref{Eq:CJ_0NewVar}) gives
			
			\begin{align*}
				\mathcal{CJ}_0 =m \sum_{\substack{i,j \ge 1\\ 
						\nu\in \Integer/m\Integer
				}}
				(i+j) u^{(\nu)}_i u^{(\nu)}_j \pder{}{u^{(\nu)}_{i+j}}
				+mij u_{i+j}^{(\nu)} 
				\frac{\partial^2}{\partial u^{(\nu)}_iu^{(\nu)}_j}
			\end{align*}
			and this finishes the proof for $\mathcal{CJ}_0$
			
			By the same reasoning, for $\mathcal{CJ}_k$ one gets:
			\begin{equation}\label{Eq:CJ_kNewVar}
				\mathcal{CJ}_k = \frac{1}{m}\sum_{\substack{i\geq1 \\
						\alpha,\nu_1,\nu_2 \in \Integer/m\Integer}} 
				\xi^{(\alpha+k)\nu_1-\alpha\nu_2}u^{(\nu_1)}_i
				\frac{\partial}{\partial
					u^{(\nu_2)}_i}
			\end{equation}
			this reduces to compute 
			
			\begin{equation*}
				\sum_{\substack{\alpha\in \Integer/m\Integer}} 
				\xi^{(\alpha+k)\nu_1-\alpha\nu_2} = 
				\xi^{k\nu_1}\sum_{\alpha\in \Integer/m\Integer}
				\xi^{\alpha\nu_1-\alpha\nu_2} =
				\begin{cases}
					m \ \text{if} \ \nu_1=\nu_2\\ 
					0\ \text{otherwise}
				\end{cases}
			\end{equation*}
			and plugging it into (\ref{Eq:CJ_kNewVar}) finishes the proof.
		\end{proof}
		
		\begin{theorem}\label{Th:G(m,1,n)-Schur}
			
			For all colored partitions $\overline{\lambda}=\lambda_0|\dots|\lambda_{m-1}$
			the polynomials
			\begin{equation*}
				s_{\overline{\lambda}}(\mathbf{P}^{(0)},\dots,\mathbf{P}^{(m-1)}) \bydef
				s_{\lambda_0}(\mathbf{P}^{(0)})\dots s_{\lambda_{m-1}}(\mathbf{P}^{(m-1)})
			\end{equation*}  
			where $s_{\lambda_i}$ are Schur polynomials and $\mathbf{P}^{(\alpha)}$ 
			means $(\frac{\sum_{\beta} \xi^{\alpha \beta}p_{1}^{(\alpha)}}{m},
			\frac{\sum_{\beta} \xi^{\alpha \beta}p_{2}^{(\alpha)}}{m}, \dots)$, are
			eigenvectors of  $\CJ_0$ and $\CJ_k$ for all $k=1,\dots, m-1$;
			the respective eigenvalues
			are $c_0(\overline{\lambda})\bydef m\sum\limits_{i=1}^\infty \left(\lambda_{0,i}(\lambda_{0,i}-2i+1) +
			\lambda_{1,i}(\lambda_{1,i}-2i+1) + \dots +
			\lambda_{m-1,i}(\lambda_{m-1,i}-2i+1)\right)$ for $\CJ_0$ and 
			$c_k(\overline{\lambda})\bydef\sum\limits_{i=1}^\infty
			(\lambda_{0,i} + \xi^{k}\lambda_{1,i}+\dots+\xi^{k(m-1)}\lambda_{m-1,i})$ for $\mathcal{CJ}_k$.
		\end{theorem}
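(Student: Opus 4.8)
The plan is to pass to the variables $u^{(\alpha)}$ introduced in Proposition~\ref{Prop:CJinU}, in which $\CJ_0$ and the $\CJ_k$ become completely decoupled, and then to diagonalize them factor by factor using the classical theory of Schur functions. The first step is to notice that the variables $\mathbf P^{(\alpha)}$ appearing in the statement are precisely the new coordinates $u^{(\alpha)}$ of Proposition~\ref{Prop:CJinU}: inverting the linear change of variables $p^{(\beta)}_i=\sum_\alpha \xi^{\beta\alpha}u^{(\alpha)}_i$ via the orthogonality of the characters of $\Integer/m\Integer$ gives $u^{(\alpha)}_i=\frac1m\sum_\beta \xi^{-\alpha\beta}p^{(\beta)}_i$, which (up to the convention chosen for $\xi$) is exactly the definition of $\mathbf P^{(\alpha)}$. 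Consequently $s_{\overline\lambda}(\mathbf P^{(0)},\dots,\mathbf P^{(m-1)})=\prod_{\alpha=0}^{m-1}s_{\lambda_\alpha}(u^{(\alpha)})$ is a product of Schur polynomials, each living in its own disjoint block of power-sum variables.

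Next I would invoke the two classical facts about a single block. First, the classical cut-and-join operator \eqref{Eq:CJA} is diagonal in the Schur basis, $\CJ\, s_\lambda = \bigl(\sum_{(i,j)\in\lambda}(j-i)\bigr)s_\lambda$, i.e. its eigenvalue is the content sum $\tfrac12\sum_i \lambda_i(\lambda_i-2i+1)$ of $\lambda$; this is standard (see \cite{Okounkov, LandoKazarian}, and one may sanity-check it on $\lambda=(2),(1^2)$). Second, the Euler field \eqref{Eq:Euler} measures weighted degree, $E\, s_\lambda=|\lambda|\, s_\lambda$, since $s_\lambda=\sum_\mu z_\mu^{-1}\chi^\lambda_\mu p_\mu$ is homogeneous of degree $|\lambda|$ in the grading $\deg p_i=i$.

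With these in hand the result is immediate from Proposition~\ref{Prop:CJinU}. Because $\CJ_0=m(\CJ_{u^{(0)}}+\dots+\CJ_{u^{(m-1)}})$ and each $\CJ_{u^{(\alpha)}}$ acts only on the factor $s_{\lambda_\alpha}(u^{(\alpha)})$ while treating the remaining factors (which depend on disjoint variables) as constants, the product $\prod_\alpha s_{\lambda_\alpha}(u^{(\alpha)})$ is an eigenvector of $\CJ_0$ with eigenvalue $m$ times the sum of the individual content sums, which assembles into $c_0(\overline\lambda)$. Likewise, since $\CJ_k=\sum_\alpha \xi^{k\alpha}E_{u^{(\alpha)}}$ and $E_{u^{(\alpha)}}$ scales $s_{\lambda_\alpha}(u^{(\alpha)})$ by $|\lambda_\alpha|$, the same product is an eigenvector of $\CJ_k$ with eigenvalue $\sum_\alpha \xi^{k\alpha}|\lambda_\alpha|=\sum_i\bigl(\lambda_{0,i}+\xi^{k}\lambda_{1,i}+\dots+\xi^{k(m-1)}\lambda_{m-1,i}\bigr)=c_k(\overline\lambda)$.

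The only genuinely nontrivial input is the classical eigenfunction statement for \eqref{Eq:CJA}; everything else is bookkeeping, since Proposition~\ref{Prop:CJinU} has already done the essential work of separating variables. The main point to handle carefully is therefore the matching of conventions: verifying that $\mathbf P^{(\alpha)}$ really is $u^{(\alpha)}$ (getting the direction and sign of the discrete Fourier transform right), and carrying the overall scalar $m$ and the character weights $\xi^{k\alpha}$ through so that the eigenvalues emerge in the stated closed form $c_0(\overline\lambda)$ and $c_k(\overline\lambda)$ (including fixing the overall normalization constant relating the content sum to the displayed expression $\sum_i\lambda_{\alpha,i}(\lambda_{\alpha,i}-2i+1)$). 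I expect no conceptual obstacle beyond this.
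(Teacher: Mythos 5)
Your proof is correct and follows essentially the same route as the paper: the paper likewise deduces the theorem immediately from Proposition~\ref{Prop:CJinU} together with the classical facts that Schur polynomials diagonalize the cut-and-join operator \eqref{Eq:CJA} and are weighted-homogeneous eigenvectors of the Euler field \eqref{Eq:Euler}. Your write-up is in fact more explicit than the paper's one-paragraph argument, spelling out the identification of $\mathbf P^{(\alpha)}$ with $u^{(\alpha)}$ and flagging the normalization of the content-sum eigenvalue, which the paper leaves implicit.
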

		
		The theorem follows immediately from Proposition \ref{Prop:CJinU} and
		the fact that Schur polynomials are eigenvectors of the
		cut-and-join operator \eqref{Eq:CJA} (see \cite{LandoKazarian}) and of
		the Euler field \eqref{Eq:Euler} ($s_{\overline{\lambda}}$ are weighted
		homogeneous).
		
		Theorem \ref{Th:G(m,1,n)-Schur} and the Cauchy identity \cite{Macdonald}
		imply that
		\begin{equation*}
			e^{p_1^{(0)}} = e^{u^{(0)}_1}\dots e^{u^{(m-1)}_1} = \sum_{\overline{\lambda}}
			s_{\overline{\lambda}}(\mathbf{P}^{(0)},\dots,\mathbf{P}^{(m-1)})
			s_{\overline{\lambda}}(1, 0, \dots; 1, 0, \dots)
		\end{equation*}
		
		This allows us to prove the following
		
		\begin{corollary}[of Theorem \ref{Th:G(m,1,n)-Schur}]\label{Cr:LKGenFunc}
			\begin{align*}
				&\mathcal{H}(\beta_0,\beta_1,\dots,\beta_{m-1},\mathbf{p})= \\
				& \sum_{\overline{\lambda}}
				\exp\bigl(\beta_0 c_0(\overline{\lambda})+ \sum_{1\leq k\leq m-1} \beta_k c_k(\overline{\lambda}))
				\times s_{\overline{\lambda}}(1,0,\dots; 1,0,\dots)
				s_{\overline{\lambda}}(\mathbf{P}^{(0)},\dots,\mathbf{P}^{(m-1)})
			\end{align*}
		\end{corollary}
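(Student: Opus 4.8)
The plan is to combine the closed form for $\mathcal H$ from Corollary~\ref{Cor:exp(p1)} with the spectral decomposition supplied by Theorem~\ref{Th:G(m,1,n)-Schur}. Starting from
$$\mathcal{H}(\beta_0,\dots,\beta_{m-1},\mathbf{p}) = e^{\beta_0\CJ_0+\dots+\beta_{m-1}\CJ_{m-1}}e^{p^{(0)}_1},$$
I would first rewrite the initial datum $e^{p_1^{(0)}}$ using the Cauchy identity displayed just before the statement, namely
$$e^{p_1^{(0)}} = \sum_{\overline{\lambda}} s_{\overline{\lambda}}(\mathbf{P}^{(0)},\dots,\mathbf{P}^{(m-1)})\, s_{\overline{\lambda}}(1,0,\dots;1,0,\dots).$$
This expresses the object to which the cut-and-join exponential is applied as a formal linear combination of the polynomials $s_{\overline{\lambda}}$, which are precisely the common eigenvectors of the operators $\CJ_0,\dots,\CJ_{m-1}$.

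Next I would exploit the eigenvector property. By Theorem~\ref{Th:G(m,1,n)-Schur} each $s_{\overline{\lambda}}$ is a simultaneous eigenvector with $\CJ_0 s_{\overline{\lambda}} = c_0(\overline{\lambda})\, s_{\overline{\lambda}}$ and $\CJ_k s_{\overline{\lambda}} = c_k(\overline{\lambda})\, s_{\overline{\lambda}}$, so it is an eigenvector of the linear combination $\beta_0\CJ_0+\dots+\beta_{m-1}\CJ_{m-1}$ with scalar eigenvalue $\beta_0 c_0(\overline{\lambda}) + \sum_{k=1}^{m-1}\beta_k c_k(\overline{\lambda})$; the pairwise commutativity of the $\CJ_i$ guarantees this exponential is well defined. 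Applying the exponential termwise then replaces that scalar by its exponential, giving
$$e^{\beta_0\CJ_0+\dots+\beta_{m-1}\CJ_{m-1}} s_{\overline{\lambda}} = \exp\!\Bigl(\beta_0 c_0(\overline{\lambda}) + \sum_{k=1}^{m-1}\beta_k c_k(\overline{\lambda})\Bigr) s_{\overline{\lambda}}.$$
Substituting this into the Cauchy expansion yields exactly the claimed formula.

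The computation is routine once the eigenbasis is in hand; the only point requiring care is justifying the termwise action of the exponential on the infinite sum. This is handled by working in the graded ring $\Complex[\mathbf p]$: the exponential $e^{\sum_k \beta_k \CJ_k}$ is defined as a formal power series in the $\beta_k$, each $\CJ_i$ preserves total degree, and for fixed degree $n$ there are only finitely many colored partitions $\overline{\lambda}$ with $|\overline{\lambda}| = n$. Hence in each graded piece the sum is finite and all manipulations—expansion of the exponential, interchange with the Cauchy sum, and diagonalization—are purely algebraic, so no convergence issue arises.
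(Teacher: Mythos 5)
Your proof is correct and follows essentially the same route the paper intends: expand the initial condition $e^{p_1^{(0)}}$ via the Cauchy identity into the eigenbasis $\{s_{\overline{\lambda}}\}$ of the commuting operators $\CJ_0,\dots,\CJ_{m-1}$, then apply the exponential termwise using the eigenvalues from Theorem \ref{Th:G(m,1,n)-Schur}. The paper leaves these steps implicit (it only displays the Cauchy expansion and asserts the corollary), and your added remark on degree-by-degree finiteness is a harmless strengthening.
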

		This is the $G(m,1,n)$-analog of the formula expressing the simple Hurwitz
		numbers via the Schur polynomials \cite{LandoKazarian}

		\section{KP Hierarchy}
		
		The KP hierarchy is one of the best-known and studied integrable
		systems; see e.g.\cite{Kramer,KazarianHodge,FSpace} for details. It
		is an infinite system of PDE applied to a formal series $F \in
		\Complex[[t]]$ where $t = (t_1, t_2, \dots)$ is a countable collection
		of variables (``times''). Each equation looks like
		$\pdertwo{F}{t_i}{t_j} = P_{ij}(F)$ where $P_{ij}$ is a homogeneous
		multivariate polynomial with partial derivatives of $F$ used as its
		arguments. For example, the first two equations of the hierarchy are
		
		\begin{align*}
			&F_{22}=-\frac{1}{2}F_{11}^2 + F_{31} -\frac{1}{12}F_{1111}\\
			&F_{32}=-F_{11}F_{21}+F_{41}-\frac{1}{6}F_{2111};
		\end{align*}
		here $F_{i_1i_2\dots i_n}$ means $\frac{\partial^nF}{\partial
			t_{i_1}\partial t_{i_2\dots\partial t_{i_n}}}$. If $F$ is a solution
		of the hierarchy, its exponential $\tau= e^F$ is called a
		$\tau$-function.
		
		\begin{example}\label{Ex:T1Tau}
			Equations of the hierarchy do not contain first-order derivatives,
			so the function $F(t) = t_1$ is a solution of the KP hierarchy;
			$\tau = e^{t_1}$ is a $\tau$-function.
		\end{example}
		
		\begin{proposition}\label{Pp:Sato}
			There exists a Lie algebra $\mathcal G$ of differential operators on
			the space $\Complex[[p]]$ having the following properties:
			\begin{enumerate}
				\item\label{It:GLInf} $\mathcal G$ is isomorphic to a
				one-dimensional central extension $\widehat{\name{gl}(\infty)}$ of
				a Lie algebra $\name{gl}(\infty)$ of infinite matrices
				$(x_{ij})_{i,j \in \Integer}$ having nonzero elements at a finite
				set of diagonals.
				
				\item\label{It:F2F3} The Euler field $E$ and the cut-and-join
				operator $\CJ$ belong to $\mathcal G$.
				
				\item\label{It:Orbit} For any $\tau$-function $f$ and any $X \in
				\mathcal G$, the power series $e^X f$ is a $\tau$-function,
				too. 
			\end{enumerate}
		\end{proposition}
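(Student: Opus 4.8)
The plan is to realize $\mathcal G$ concretely through the boson--fermion correspondence, so that items \ref{It:GLInf}--\ref{It:F2F3} become bookkeeping while item \ref{It:Orbit} reduces to Sato's characterization of $\tau$-functions as pure states. First I would fix the semi-infinite wedge space $\Fock$ with standard basis $v_{i_1}\wedge v_{i_2}\wedge\cdots$ labelled by strictly decreasing sequences in $\HInteger$ that are eventually of the form $\{-k+\tfrac12\}$, and recall the bosonization isomorphism $\sigma\colon \Fock_0\xrightarrow{\ \sim\ }\Complex[[p]]$ of the charge-zero subspace with the polynomial ring, sending the basis vector of shape $\lambda$ to the Schur polynomial $s_\lambda(p)$ (see \cite{KazarianHodge,FSpace}). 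This is the natural home for the Schur-polynomial computations already used in Theorem \ref{Th:G(m,1,n)-Schur}.

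For item \ref{It:GLInf} I would use the standard action of $\name{gl}(\infty)$ on $\Fock$ by $E_{ij}\mapsto \psi_i\psi_j^*$; the diagonal part does not converge on the vacuum, so one passes to the normally ordered operators $r(a)=\sum_{i,j} a_{ij}\, {:}\psi_i\psi_j^*{:}$, and the resulting failure of $r$ to be a strict homomorphism is measured by the standard $2$-cocycle. This is exactly the central extension $\widehat{\name{gl}(\infty)}$, and I would then \emph{define} $\mathcal G \bydef \sigma\, r(\widehat{\name{gl}(\infty)})\, \sigma^{-1}$. A routine vertex-operator computation shows that each $r(a)$ transports under $\sigma$ to a differential operator on $\Complex[[p]]$ with polynomial coefficients and order at most two, so $\mathcal G$ consists of differential operators and is isomorphic to $\widehat{\name{gl}(\infty)}$ by construction.

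Item \ref{It:F2F3} I would settle by exhibiting $E$ and $\CJ$ as images of diagonal matrices. Both $\mathrm{diag}(k)_{k\in\HInteger}$ and $\mathrm{diag}(k^2)_{k\in\HInteger}$ lie in $\name{gl}(\infty)$ (a single nonzero diagonal, unbounded entries being allowed), and their normally ordered action on the state of shape $\lambda$ has eigenvalue $\sum_i\bigl[(\lambda_i-i+\tfrac12)-(-i+\tfrac12)\bigr]=|\lambda|$ and $\sum_i\bigl[(\lambda_i-i+\tfrac12)^2-(-i+\tfrac12)^2\bigr]=\sum_i\lambda_i(\lambda_i-2i+1)$ respectively. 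These are precisely the eigenvalues of $E$ and $\CJ$ on $s_\lambda$ recorded in Theorem \ref{Th:G(m,1,n)-Schur} and implicit in \eqref{Eq:Euler}, \eqref{Eq:CJA}; since both $\CJ$ and $\sigma\, r(\mathrm{diag}(k^2))\,\sigma^{-1}$ are diagonal in the Schur basis with identical spectra, they coincide as operators, and likewise $E=\sigma\, r(\mathrm{diag}(k))\,\sigma^{-1}$. Hence both belong to $\mathcal G$.

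Finally, for item \ref{It:Orbit} I would invoke Sato's theorem: a series $f\in\Complex[[p]]$ is a KP $\tau$-function if and only if $\sigma^{-1}(f)$ is a decomposable (pure) vector of $\Fock$, equivalently lies in the orbit $\widehat{GL}(\infty)\cdot|0\rangle$ of the vacuum, equivalently satisfies the Plücker/Hirota bilinear relations. Since $\mathcal G\cong\widehat{\name{gl}(\infty)}$ is the Lie algebra of $\widehat{GL}(\infty)$, the element $e^X$ lies in that group and maps the orbit of the vacuum to itself, so $e^X f$ is again a $\tau$-function. The hard part will be the central-extension bookkeeping: one must verify that normal ordering yields the honest cocycle so that the bracket on $\mathcal G$ closes and agrees with $\widehat{\name{gl}(\infty)}$, and that the two unbounded diagonal generators of item \ref{It:F2F3} act by well-defined operators after normal ordering. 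The eigenvalue comparison on Schur polynomials is what makes the latter point painless, while Sato's equivalence between decomposability and the bilinear identity is what makes item \ref{It:Orbit} rigorous rather than heuristic.
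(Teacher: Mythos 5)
The paper offers no proof of this proposition at all---it simply defers to \cite{Kramer}, \cite{2Sato} and \cite{KazarianHodge}---and the argument in those references is exactly the boson--fermion / Sato-Grassmannian argument you sketch (normally ordered $\widehat{\name{gl}(\infty)}$-action on the semi-infinite wedge, identification of $E$ and $\CJ$ with the normally ordered diagonal operators via their spectra on Schur polynomials, and Sato's characterization of $\tau$-functions as the $\widehat{GL}(\infty)$-orbit of the vacuum), so your route coincides with the paper's intended one. Your outline is sound apart from one harmless overstatement: a general element of $\widehat{\name{gl}(\infty)}$ (e.g.\ the image of $\mathrm{diag}(k^3)$) bosonizes to a differential operator of order higher than two, but nothing in items (1)--(3) depends on that bound.
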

		
		See \cite{Kramer} for the definition of $\mathcal G$, \cite{2Sato} and
		\protect{[section 6]\cite{KazarianHodge} for the proof.
			
			\begin{theorem}\label{Th:HGsolKP}
				The generating function 
				$\mathcal{H}(\beta_0,\beta_1,\dots,\beta_{m-1},\mathbf{p})$ is a
				$m$-parameter family of $\tau$-functions, independently in the $p^{(i)}$
				variables.
			\end{theorem}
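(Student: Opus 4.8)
The plan is to diagonalize the action of the cut-and-join operators and then invoke the Sato-type invariance statement of Proposition \ref{Pp:Sato} block by block. First I would start from the closed form of Corollary \ref{Cor:exp(p1)},
\[
\mathcal{H} = \exp\Bigl(\sum_{k=0}^{m-1}\beta_k \CJ_k\Bigr)\, e^{p_1^{(0)}},
\]
and pass to the variables $u^{(\alpha)}_i$ of Proposition \ref{Prop:CJinU}. Using $\CJ_0 = m\sum_\alpha \CJ_{u^{(\alpha)}}$ and $\CJ_k = \sum_\alpha \xi^{k\alpha}E_{u^{(\alpha)}}$ and collecting terms block by block, the exponent becomes
\[
\sum_{k=0}^{m-1}\beta_k \CJ_k = \sum_{\alpha=0}^{m-1} A_\alpha,
\qquad
A_\alpha := m\beta_0\, \CJ_{u^{(\alpha)}} + \Bigl(\sum_{k=1}^{m-1}\beta_k \xi^{k\alpha}\Bigr) E_{u^{(\alpha)}}.
\]
The initial datum factorizes as well, since $p_1^{(0)} = \sum_\alpha u_1^{(\alpha)}$ gives $e^{p_1^{(0)}} = \prod_\alpha e^{u_1^{(\alpha)}}$.

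The key observation is that for distinct $\alpha$ the operators $A_\alpha$ act on disjoint sets of variables $\{u^{(\alpha)}_i\}_{i\ge 1}$, hence they pairwise commute. Therefore the exponential splits as a product, $\exp(\sum_\alpha A_\alpha)=\prod_\alpha \exp(A_\alpha)$, and since $\exp(A_\alpha)$ leaves untouched every block other than $u^{(\alpha)}$, I obtain the factorization
\[
\mathcal{H} = \prod_{\alpha=0}^{m-1} \mathcal{H}_\alpha,
\qquad
\mathcal{H}_\alpha := \exp(A_\alpha)\, e^{u_1^{(\alpha)}},
\]
where each $\mathcal{H}_\alpha$ depends only on the $\alpha$-th block $u^{(\alpha)}$ (equivalently, on $\mathbf{P}^{(\alpha)}$ up to the normalization built into that notation).

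It then suffices to treat each factor separately. Fixing $\alpha$ and identifying the KP times $t_i$ with the variables $u^{(\alpha)}_i$, the function $e^{u_1^{(\alpha)}}$ is precisely the $\tau$-function $e^{t_1}$ of Example \ref{Ex:T1Tau}. By item (\ref{It:F2F3}) of Proposition \ref{Pp:Sato} both $\CJ_{u^{(\alpha)}}$ and $E_{u^{(\alpha)}}$ lie in the Lie algebra $\mathcal G$, and since $\mathcal G$ is closed under complex linear combinations, the operator $A_\alpha$ — a fixed combination of them with coefficients depending on $\beta_0,\dots,\beta_{m-1}$ — lies in $\mathcal G$ as well. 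By item (\ref{It:Orbit}), $\mathcal{H}_\alpha=\exp(A_\alpha)\,e^{u_1^{(\alpha)}}$ is again a $\tau$-function in the block $u^{(\alpha)}$. As this holds for every $\alpha$ and every value of the parameters, $\mathcal{H}$ is a product of $m$ $\tau$-functions, one in each independent block of variables, parametrized by $(\beta_0,\dots,\beta_{m-1})$, which is the claim.

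The one step that requires genuine care, rather than being purely formal, is the bookkeeping of the linear change of variables: one must verify that after the substitution of Proposition \ref{Prop:CJinU} the operators $\CJ_{u^{(\alpha)}}$ and $E_{u^{(\alpha)}}$ are exactly the classical cut-and-join \eqref{Eq:CJA} and Euler field \eqref{Eq:Euler} in the single block $u^{(\alpha)}$, with the normalization of the variables chosen so that they coincide with the operators of Proposition \ref{Pp:Sato} living in $\mathcal G$. Once this identification, together with the disjointness of the blocks, is in place, the factorization and the conclusion follow immediately from the invariance of $\tau$-functions under the $\mathcal G$-action.
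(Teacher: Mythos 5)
Your argument is correct and follows essentially the same route as the paper: pass to the $u^{(\alpha)}$ variables via Proposition \ref{Prop:CJinU}, split the exponent into the commuting block operators $m\beta_0\,\CJ_{u^{(\alpha)}}+\bigl(\sum_k\beta_k\xi^{k\alpha}\bigr)E_{u^{(\alpha)}}$ acting on disjoint variable sets together with the factorization $e^{p_1^{(0)}}=\prod_\alpha e^{u_1^{(\alpha)}}$, and then apply items (\ref{It:F2F3}) and (\ref{It:Orbit}) of Proposition \ref{Pp:Sato} with Example \ref{Ex:T1Tau} block by block. Your final remark about checking that $\CJ_{u^{(\alpha)}}$ and $E_{u^{(\alpha)}}$ really are the operators \eqref{Eq:CJA} and \eqref{Eq:Euler} in a single block is exactly the content already established in Proposition \ref{Prop:CJinU}, so nothing further is needed.
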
 
			
			\begin{proof}
				Denote by $\mathbf{u}^{\alpha}_i\bydef\sum_{0\leq\beta\leq m-1}
				\xi^{\alpha\beta}u_i^\beta$.
				Equation \eqref{Eq:GenFunc} and Proposition \ref{Prop:CJinU} imply that
				\begin{align*}
					&\mathcal{H}(\beta_0,\dots,\beta_{m-1},\mathbf{u}^{0},\dots,\mathbf{u}^{m-1}) \\
					&= e^{\beta_0 m(\CJ_{u^{(0)}}+\dots+
						\CJ_{u^{(m-1)}})}e^{(\beta_1\mathcal{E}_1+\dots
						\beta_{m-1}\mathcal{E}_{m-1})} e^{u^{(0)}_1}\dots e^{u^{(m-1)}_1} \\
					&=e^{(\beta_0\CJ_{u^{(0)}}+\beta_1E_{u^{(0)}}+\dots+\beta_{m-1}E_{u^{(0)}})}e^{u^{(0)}}\dots
					e^{\beta_0\CJ_{u^{(m-1)}}+\beta_1\xi^{m-1}E_{u^{(m-1)}}+\dots+\beta_{m-1}\xi E_{u^{(m-1)}}}e^{u^{(m-1)}}
				\end{align*}
				
				(operators $\CJ_{u^{(\alpha)}}$, $\CJ_{u^{(\nu)}}$,$E_{u^{(\alpha')}}$ 
				and $E_{u^{(\nu')}}$ commute because they act on different
				sets of variables).
				
				By assertion \ref{It:F2F3} of Proposition \ref{Pp:Sato},
				$\beta_0\CJ_{u^{(0)}}+\beta_1E_{u^{(0)}}+\dots+\beta_{m-1}E_{u^{(0)}}
				\in \mathcal G_u$ (the Lie algebra $\mathcal
				G$ acting by differential operators on the $u^{(0)}$ variables). By Example
				\ref{Ex:T1Tau}, $e^{u^{(0)}_1}$ is a $\tau$-function 
				(in the $\mathbf{u}^{(0)}$ variables),
				and so is $\mathcal{H}^B(\beta_0,\dots,\beta_{m-1},\mathbf{u}^{0},\dots,\mathbf{u}^{m-1})$
				by assertion
				\ref{It:Orbit} of the same proposition. The reasoning for the $u^{(\alpha)}$
				with $\alpha= 1,\dots,m-1$ variables is the same.
			\end{proof}

			\section{Hurwitz numbers and ramified covering}
			\subsection{Wreath ramified coverings}
			In \cite{Zhang} the notion of wreath branched cover for a general wreath
			product of $S_n$ is introduced, we will follow it closely.
			For details and proofs see \cite{Zhang}.
			
			In section 1.2 we showed the correspondence between the conjugacy classes
			of $G(m,1,n)$ when embedded in $S_{mn}$, and the conjugacy classes
			of $\Integer/m\Integer  \wr S_n$. In this section  conjugacy classes are
			subsets of the wreath product $\Integer/m \Integer  \wr S_n$.
			
			\noindent A $G(m,1,n)-$ramified cover $(p,f)$ of degree $mn$ over $\Complex P^1$
			an ordinary ramified cover  $f$ of degree $n$: 
			\begin{align*}
				f : M \to \Complex P^1
			\end{align*} 
			with a set of critical values $Y$ together with a 
			smooth curve $\tilde{M}$ with a left $\Integer/m\Integer-$action and 
			$\Integer/m\Integer$-equivariant map ($\Integer /m \mathbb Z$ acts trivially on $M$)
			$
			p: \tilde{M}\to M
			$
			such that 
			\begin{equation*}
				p' : \tilde{M} - p^{-1}(f^{-1}(Y))\to M - f^{-1}(Y),
			\end{equation*}
			where $p'$ is the restriction of $p$
			to $\tilde{M} - p^{-1}(f^{-1}(Y))$, is a principal  $\mathbb Z/m\mathbb Z$- bundle map.
			
			Two $G(m,1,n)-$ramified covers
			\begin{equation*}
				F: \tilde{M} \xrightarrow{{p}} M
				\xrightarrow{{f}}\Complex P^1
			\end{equation*}
			and 
			\begin{equation*}
				F': \tilde{M'} \xrightarrow{{p'}} M'
				\xrightarrow{{f'}} \Complex P^1
			\end{equation*}
			
			\noindent are said to be equivalent if there is a pair of biholomorphic maps
			$\tilde{\phi}: \tilde{M}\to\tilde{M}' $ and $\phi: M\to M'$
			such that $\tilde{\phi}$ is $\Integer/m\Integer$-equivariant and the
			following diagram commutes
			\begin{center}
				\begin{tikzcd}
					\tilde{M} \arrow[d, "\tilde{\phi}"] \arrow[r,"p"]     & M \arrow[r,"f"] \arrow[d, "\phi"]    & \Complex P^1 \arrow[d, "id"]\\
					\tilde{M}' \arrow[r,"p'"]                         & M' \arrow[r,"f'"]                       & \Complex P^1.
				\end{tikzcd}
			\end{center}
			
			An automorphism of a $G(m,1,n)-$ramified cover $(f,p)$ is a pair of biholomorphic maps
			$\tilde{\phi}: \tilde{M}\to\tilde{M} $ and $\phi: M\to M$ such that $\tilde{\phi}$
			is $\Integer/m\Integer$-equivariant and the 
			following diagram commutes
			\begin{center}
				\begin{tikzcd}
					\tilde{M} \arrow[d, "\tilde{\phi}"] \arrow[r,"p"]     & M \arrow[r,"f"] \arrow[d, "\phi"]    & \Complex P^1 \arrow[d, "id"]\\
					\tilde{M} \arrow[r,"p"]                         & M \arrow[r,"f"]                       & \Complex P^1.
				\end{tikzcd}
			\end{center}
			
			An important result of \cite{Zhang} is the description of the monodromy 
			of $G(m,1,n)-$ra\-mi\-fied covers. We describe it now. Let $(f,p)$ a
			$G(m,1,n)-$ramified cover
			\begin{equation*}
				\tilde{M} \xrightarrow{{p}} M
				\xrightarrow{{f}}\Complex P^1.
			\end{equation*}
			
			Taking a critical value $y\in Y$, the preimage $f^{-1}(y)$ is a set
			of points $x_1,\dots,x_\ell$ with multiplicities $\lambda_1,\dots,\lambda_\ell$,
			where $\lambda_1+\lambda_2+\dots+\lambda_\ell=n$, i.e a partition of $n$. 
			We can assume without loss  of generality that $\lambda_1\leq\lambda_2\leq
			\dots\leq \lambda_\ell$ . Furthermore pick a regular point $b$ in $\Complex P^1$,
			its preimage by $f$ are $n$ points $q_1,\dots,q_n$.
			The map $p'$ being a normal cover, we can identify each fiber 
			$p^{-1}(q_i)$ with $\Integer/m\Integer$ using local trivialization:
			choose a small neighborhood $U_i$ of $q_i$ and a diffeomorphism 
			
			\begin{align*}
				\psi_i: p^{-1}(U_i)\to U_i\times\Integer/m\Integer, \ z\to (p(z),\phi_i(z))
			\end{align*}
			such that $\phi_i(hz)=h\phi_i(z)$ for any $h \in\Integer/m\Integer $
			
			Let a loop $\gamma$ based on $b$ and winding once around $y$.
			The monodromy of $f$ along the loop
			$\gamma$ is a permutation $(q_{1,1},\dots,q_{1,\lambda_1})\dots(q_{\ell,1},\dots,
			q_{\ell,\lambda_\ell})$ of $S_n$ belonging to the conjugacy class indexed by $\lambda$.
			Pick now one of the cycles, say $(q_{j,1},\dots,q_{j,\lambda_j})$,
			and denote by $\gamma_i$ the lifting of $\gamma$ starting at $q_{j,i}$.
			Now starting with an element $z_1\in p^{-1}(q_{j,1})$, the unique lifting 
			$\tilde{\gamma_1}$ of $\gamma_1$ will end at some $z_2\in p^{-1}(q_{j,2})$,
			continuing so we get that the lifting $\tilde{\gamma_{\lambda_j}}$ of $\gamma_{\lambda_j}$
			will end at some $z_1'\in p^{-1}(q_{j,1})$. Suppose 
			
			\begin{align*}
				\phi_2(z_2)=g_1\phi_1(z_1), \dots, \phi_{\lambda_j}(z_{\lambda_j})=
				g_{\lambda_{j-1}}\phi_{\lambda_{j-1}}(z_{\lambda_{j-1}}), \
				\phi_1(z_1')=g_{\lambda_j}\phi_{\lambda_j}(z_{\lambda_j}))
			\end{align*}
			for $g_1,\dots,g_{\lambda_j}\in \Integer/m\Integer,$ then
			$$g_{\lambda_j}g_{\lambda_{j-1}}\dots g_1(z_1)=\phi_1(z_1')$$.
			
			The product $g_{\lambda_j}g_{\lambda_{j-1}}\dots g_1(z_1)=\phi_1(z_1')\in \Integer/m\Integer$
			can be viewed as the monodromy of $p$ induced by the loop $\gamma_1\dots\gamma_{\lambda_j}$
			starting and ending at $q_{j,1}$.
			
			Add the following restriction that $\phi_1(z_1)=0\in \Integer/m\Integer$.
			
			\begin{lemma}\cite{Zhang}
				The cycle product $g_{\lambda_j}g_{\lambda_{j-1}}\dots g_1\in \Integer/m\Integer$
				is determined by $\gamma_1\dots\gamma_{\lambda_j}$ up to conjugacy.
			\end{lemma}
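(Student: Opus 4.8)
The plan is to realize the cycle product $g_{\lambda_j}\cdots g_1$ as the holonomy of the principal $\Integer/m\Integer$-bundle $p'$ along one specific loop downstairs, and then to invoke the homotopy invariance of holonomy. First I would note that, because $(q_{j,1},\dots,q_{j,\lambda_j})$ is a cycle of the monodromy permutation of $f$, each lift $\gamma_i$ runs from $q_{j,i}$ to $q_{j,i+1}$ (indices cyclic modulo $\lambda_j$), all inside $M - f^{-1}(Y)$; hence the concatenation $\Gamma \bydef \gamma_1\gamma_2\cdots\gamma_{\lambda_j}$ is a genuine loop based at $q_{j,1}$ in the base of the principal bundle $p'$. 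Lifting $\Gamma$ through $p$ from the normalized point $z_1$ (with $\phi_1(z_1)=0$) and using the defining relations $\phi_{i+1}(z_{i+1})=g_i\phi_i(z_i)$ together with $\phi_1(z_1')=g_{\lambda_j}\phi_{\lambda_j}(z_{\lambda_j})$, the successive substitutions telescope to $\phi_1(z_1')=(g_{\lambda_j}\cdots g_1)\phi_1(z_1)$. This exhibits $g_{\lambda_j}\cdots g_1$ precisely as the value of the holonomy homomorphism $\pi_1(M-f^{-1}(Y),q_{j,1})\to\Integer/m\Integer$ on the class $[\Gamma]$.

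Next I would show that this value depends only on the homotopy class of $\Gamma$. This is the homotopy lifting property of the covering associated with $p'$: a deformation of $\Gamma$ rel basepoint lifts to a deformation of the unique path starting at $z_1$, and since the fibre $p^{-1}(q_{j,1})$ is discrete the endpoint $z_1'$, and therefore the group element $g_{\lambda_j}\cdots g_1$, varies continuously inside a discrete set and so is locally constant. Thus the cycle product is an invariant of $[\Gamma]\in\pi_1$, i.e.\ of the loop $\gamma_1\cdots\gamma_{\lambda_j}$.

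Finally I would track the dependence on the auxiliary choices, which is the only place the words ``up to conjugacy'' enter. There are three such choices: the base point $z_1$ in the fibre, the local trivializations $\psi_i$, and which element of the cycle is declared first. Replacing $\phi_i$ by $h_i\phi_i$ for $h_i\in\Integer/m\Integer$ sends $g_i\mapsto h_{i+1}g_ih_i^{-1}$ (with $h_{\lambda_j+1}\bydef h_1$), so in the product every interior factor cancels and $g_{\lambda_j}\cdots g_1\mapsto h_1(g_{\lambda_j}\cdots g_1)h_1^{-1}$; replacing $z_1$ by $hz_1$ or cyclically relabelling the starting index likewise conjugates the product. Hence the cycle product is well defined only as a conjugacy class, as claimed. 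The hard part is purely the bookkeeping of this telescoping transformation law $g_i\mapsto h_{i+1}g_ih_i^{-1}$ --- making sure the boundary term survives while the interior terms cancel --- and I would remark that for the abelian group $\Integer/m\Integer$ relevant here every conjugacy class is a singleton, so the cycle product is in fact literally independent of all these choices; stating it only up to conjugacy is what carries over verbatim from the general wreath-product treatment of \cite{Zhang}.
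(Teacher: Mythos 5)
Your argument is correct. Note that the paper itself gives no proof of this lemma: it is imported verbatim from \cite{Zhang} (where it is stated for a general wreath product $G\wr S_n$), so there is nothing in the text to compare against line by line. What you supply is the standard monodromy argument, and it is exactly the right one: the concatenation $\gamma_1\cdots\gamma_{\lambda_j}$ is a loop in $M-f^{-1}(Y)$ because the $q_{j,i}$ form a single cycle of the monodromy of $f$; the telescoping of the relations $\phi_{i+1}(z_{i+1})=g_i\phi_i(z_i)$ identifies the cycle product with the holonomy of the principal $\Integer/m\Integer$-bundle $p'$ along that loop; homotopy invariance follows from unique path lifting for this finite covering; and the only ambiguity comes from the choice of trivializations $\phi_i$, of the initial point $z_1$ in the fibre, and of the starting index, each of which conjugates the product via the telescoping law $g_i\mapsto h_{i+1}g_ih_i^{-1}$. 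Your closing observation is also worth keeping: since $\Integer/m\Integer$ is abelian, the conjugacy class is a singleton and the cycle product is literally well defined here; the ``up to conjugacy'' phrasing is only needed for the nonabelian wreath products treated in \cite{Zhang}. The one point I would tighten is the claim that changing $z_1$ within the fibre amounts to acting by some $h\in\Integer/m\Integer$: you should say explicitly that this uses the fact that $p'$ is a \emph{principal} bundle, so the structure group acts simply transitively on each fibre and equivariance of the $\phi_i$ converts the change of basepoint into the stated conjugation.
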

			
			Now given a loop $\gamma$ winding once around a critical value $y$, we get an element
			$\sigma\in S_n$, together with an element $g=(g_1,\dots,g_n)\in (\Integer/m\Integer)^n$.
			In other word we get a map 
			\begin{align*}
				\mathfrak{M} :\ \pi_1(\Complex P^1 - Y)&\to \Integer/\Integer m \wr S_n\\
				&[\gamma] \to (\sigma,g)
			\end{align*}
			which is a group homomorphism. This is the monodromy representation associated to 
			with the $G(m,1,n)-$cover $(p,f)$
			
			\begin{lemma}\cite{Zhang}
				The monodromy group of a $G(m,1,n)-$ramified cover $(p,f)$ of 
				degree $mn$ over $\Complex P^1$ is a subgroup of the wreath product
				$\Integer/\Integer m \wr S_n$, which is determined up to conjugacy class. And it is transitive if $M$ is connected.
			\end{lemma}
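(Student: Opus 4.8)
The plan is to read off all three assertions from the construction of the monodromy representation $\mathfrak{M}$ established just above the statement. First I would note that the subgroup assertion is essentially formal: by construction $\mathfrak{M}\colon \pi_1(\Complex P^1 - Y) \to \Integer/m\Integer \wr S_n$ is a group homomorphism, so its image is a subgroup of the wreath product, and this image is by definition the monodromy group of $(p,f)$. The substance of the lemma therefore lies in the well-definedness up to conjugacy and in the transitivity criterion.

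For the conjugacy invariance, I would enumerate the choices entering the definition of $\mathfrak{M}$ and track how each one acts on the representation. There are three sources of ambiguity: the labeling of the fiber $f^{-1}(b) = \{q_1, \dots, q_n\}$ by $\{1, \dots, n\}$; the local trivializations $\psi_i$ used to identify each fiber of $p$ with $\Integer/m\Integer$; and the base point $b$ together with a chosen path to any competing base point. Relabeling the fiber of $f$ by a permutation $w \in S_n$ conjugates the permutation part $\sigma$ by $w$ and carries the trivialization data along with it, so it conjugates $\mathfrak{M}$ by $[w;0]$. Replacing each $\psi_i$ by a constant rotation $h_i \in \Integer/m\Integer$ changes each coordinate $g_i$ of the base element into $g_i + h_{\sigma(i)} - h_i$, which by the multiplication rule $[u;g]\cdot[v;b] = [uv; v(g)+b]$ is exactly conjugation by $[1;(h_1,\dots,h_n)]$; the cycle-products $g_{i_1} + \dots + g_{i_k}$ are telescoping and hence unchanged, as they must be by the preceding lemma. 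Finally, moving the base point along a path $\delta$ conjugates $\mathfrak{M}$ by $\mathfrak{M}(\delta)$. Since the base $(\Integer/m\Integer)^n$ and $S_n$ generate $\Integer/m\Integer \wr S_n$, any two admissible sets of choices produce representations conjugate inside the wreath product, so the monodromy group is well defined up to conjugacy.

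For the transitivity statement I would invoke the standard dictionary between connected covers and transitive monodromy. After deleting the critical values, the restriction $f\colon M - f^{-1}(Y) \to \Complex P^1 - Y$ is a genuine covering space of degree $n$, and its fiber over $b$ is a transitive $\pi_1(\Complex P^1 - Y)$-set if and only if the total space is path-connected, that is, if and only if $M$ is connected. Composing $\mathfrak{M}$ with the projection $\Integer/m\Integer \wr S_n \to S_n$ recovers precisely this action of $f$ on $f^{-1}(b)$, so connectedness of $M$ is equivalent to transitivity of the projected monodromy, which is the asserted transitivity.

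The step I expect to require the most care is the cocycle bookkeeping in the conjugacy argument: one must verify that a change of the local trivializations of the principal $\Integer/m\Integer$-bundle $p$ alters the collected data $g = (g_1,\dots,g_n)$ by exactly a coboundary, and that this combines correctly with a relabeling of the sheets of $f$ to give honest conjugation by a wreath-product element. This is the one place where the principal-bundle monodromy and the permutation monodromy genuinely interact rather than decouple, and it is where the group law $[u;g]\cdot[v;b]=[uv;v(g)+b]$ must be matched against the geometry of the lifted loops.
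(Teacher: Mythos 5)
Your proposal is correct, and it fills in an argument the paper itself does not supply: this lemma is quoted from \cite{Zhang} with the proof deferred there, so there is no in-paper proof to diverge from. Your three-part reduction (image of the homomorphism $\mathfrak{M}$ is a subgroup; the ambiguities in labeling the fiber of $f$, in the trivializations $\phi_i$, and in the base point each act by conjugation inside $\Integer/m\Integer \wr S_n$; connectedness of $M$ is equivalent to transitivity of the projected action on $f^{-1}(b)$) is the standard argument and matches the monodromy construction set up in the paper just before the statement. Your cocycle computation is the one step that genuinely needs checking, and it comes out right: replacing $\phi_i$ by $\phi_i + h_i$ sends $g_i$ to $g_i + h_{\sigma(i)} - h_i$, which under the product rule $[u;g]\cdot[v;b]=[uv;v(g)+b]$ is exactly conjugation by $[\mathrm{id};(h_1,\dots,h_n)]$, and the cycle-products telescope as required.
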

			
			The inverse construction is as follows:
			given a group homomorphism \\
			$\mathfrak{M} : \pi_1(\Complex P^1 - Y)\to \Integer/\Integer m \wr S_n$,
			we first construct a ramified cover $f : M\to \Complex P^1$ of degree $n$,
			using the composition homomorphism of $\mathfrak{M}$ and the projection map
			$\Integer/\Integer m \wr S_n \to S_n$ as monodromy representation. Then
			we construct a cover $p : \tilde{M} \to M$ using the information resulting
			from the cycle products. This produce a $G(m,1,n)-$ramified cover $(p,f)$ of 
			degree $mn$, where $\mathfrak{M}$ is its monodromy representation.
			
			The construction described above gives:
			
			\begin{lemma}\cite{Zhang}
				The following correspondence is one-to-one between the two sets:
				
				\begin{itemize}
					\item Non isomorphic $G(m,1,n)$-ramified covers $(p,f)$ of $\Complex P^1$
					of degree $mn$ 
					
					\item $Hom(\pi(\Complex P^1-Y), 
					\Integer/\Integer m \wr S_n)/ (\Integer/\Integer m \wr S_n)$
				\end{itemize}
				
			\end{lemma}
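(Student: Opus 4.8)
The plan is to establish the bijection by constructing explicit maps in both directions and showing they are mutually inverse, using the classical theory of covering spaces as the backbone. The key observation is that a $G(m,1,n)$-ramified cover $(p,f)$ is built from two layers: the bottom layer $f\colon M \to \Complex P^1$ is an ordinary degree-$n$ branched cover, and the top layer $p\colon \tilde M \to M$ is a principal $\Integer/m\Integer$-bundle (away from the preimages of the branch locus). The whole geometric datum is therefore equivalent, after removing the critical values $Y$, to an unbranched covering space problem over $\Complex P^1 - Y$, and unbranched covers of a space are classified up to isomorphism by conjugacy classes of subgroups of (equivalently, by $\pi_1$-sets, or by homomorphisms up to conjugacy from) the fundamental group.

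\medskip

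\noindent\textbf{The forward map.} First I would send a cover $(p,f)$ to its monodromy representation $\mathfrak M \in \name{Hom}(\pi_1(\Complex P^1 - Y), \Integer/m\Integer \wr S_n)$, as constructed in the preceding discussion: the $S_n$-part records the monodromy of the base cover $f$ on the $n$ sheets, and the $\Integer/m\Integer$-part records, for each sheet, the cycle-product monodromy of the principal bundle $p$ computed via the local trivializations $\psi_i$. I would invoke the two lemmas already proved from \cite{Zhang}: the cycle product is well defined up to conjugacy, and the whole monodromy group is determined up to conjugacy. Since the trivializations and the choice of base fiber are only canonical up to the $\Integer/m\Integer \wr S_n$-action, the representation $\mathfrak M$ is well defined precisely as an element of $\name{Hom}(\pi_1, \Integer/m\Integer \wr S_n)/(\Integer/m\Integer \wr S_n)$, which is the target set. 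One must check that equivalent covers (related by the commuting biholomorphism diagram) yield conjugate monodromy, which follows because $\tilde\phi$ and $\phi$ induce an identification of the fibers compatible with the $\Integer/m\Integer$-action.

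\medskip

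\noindent\textbf{The inverse map.} In the other direction I would use the explicit reconstruction already sketched in the text: given $\mathfrak M$, compose with the projection $\Integer/m\Integer \wr S_n \to S_n$ to get an $S_n$-monodromy, apply the classical Riemann existence theorem to produce a branched cover $f\colon M \to \Complex P^1$ of degree $n$, then use the $\Integer/m\Integer$-valued cycle-product data to build the principal bundle $p\colon \tilde M \to M$ and fill in the fibers over $f^{-1}(Y)$ to obtain a compact curve $\tilde M$. This yields a $G(m,1,n)$-ramified cover whose monodromy is $\mathfrak M$ by construction.

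\medskip

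\noindent Finally I would verify that the two constructions are mutually inverse. One composite is immediate: starting from $\mathfrak M$, building $(p,f)$, and reading off the monodromy returns $\mathfrak M$ up to conjugacy by the tautological nature of the reconstruction. The other composite, starting from a cover and rebuilding it from its monodromy, recovers the cover up to equivalence because the Riemann existence theorem gives an equivalence of branched covers from equal $S_n$-monodromy, and the principal bundle is determined by its monodromy over $M - f^{-1}(Y)$ together with the canonical completion over the branch fibers. The main obstacle I anticipate is not the combinatorics but the analytic bookkeeping at the branch points: one must check that the principal $\Integer/m\Integer$-bundle $p$, which is a priori only defined and trivialized over the complement of $f^{-1}(Y)$, extends uniquely to a well-defined $\Integer/m\Integer$-equivariant map on the compactified curve $\tilde M$ in a way that is independent of the trivialization choices, so that the correspondence is genuinely well defined on equivalence classes rather than on covers with extra rigidifying data. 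Handling the $\beta_\alpha$-cycle structure at a ramification point — where the number of local sheets of $p$ is $m/\gcd$ of the cycle-product order — is exactly where the wreath structure interacts with the ramification, and this is the step requiring the most care.
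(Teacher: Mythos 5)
Your proposal follows essentially the same route as the paper, which states this lemma as a result of \cite{Zhang} and ``proves'' it only by the construction sketched in the surrounding text: the monodromy representation $\mathfrak M$ in one direction, and the inverse construction (base cover from the $S_n$-projection, then the principal $\Integer/m\Integer$-bundle from the cycle products) in the other. Your additional attention to the well-definedness of the correspondence on equivalence classes and to the completion of $\tilde M$ over the branch fibers is a reasonable elaboration of details the paper defers to \cite{Zhang}.
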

			
			\subsection{Hurwitz numbers: the algebro-geometric definition} 
			
			We now define in an algebro-geometric way Hurwitz numbers 
			for the reflection groups $\G$.
			
			We say that a $G(m,1,n)-$ramified cover $(p,f)$ of degree $mn$ 
			over $\Complex P^1$ has profile $(\overline{\lambda},n_0,n_1,\dots,n_{m-1})$ if:
			
			\begin{itemize}
				
				\item for each $\alpha=0,\dots m-1$, there are
				$n_\alpha$ other critical values $y_{\alpha,1}\dots y_{\alpha,n_\alpha}$ in 
				$\Complex P^1$,
				and the monodromy over these $y_{\alpha,1}\dots y_{\alpha,n_\alpha}$ must be 
				a $\beta_\alpha$-cycle.
				
				\item the monodromy over $\infty$ must be $\overline{\lambda}$
				
				\item the map $f\circ p$ is unramified outside of $Y= \{\infty\} \cup 
				\{y_{\alpha,i}\}_{\substack{\alpha= 0,\dots,m-1 \\
						1\leq i \leq n_\alpha}}$
			\end{itemize}
			
			The genus of $M$ can be found by using the Riemann-Hurwitz formula, and we
			do not require $M$ to be connected.
			Thus the algebro-geometric definition of $h_{n_0,n_1,\dots,n_{m-1}\overline{\lambda}}$
			is the number of non isomorphic $G(m,1,n)-$ramified cover $(p,f)$ of degree $mn$ 
			over $\Complex P^1$ having profile $(\overline{\lambda},n_0,n_1,\dots,n_{m-1})$
			divided by their weight. In a more compact formula:
			
			\begin{equation*}
				\mathbf{h}_{n_0,n_1,\dots,n_{m-1}\overline{\lambda}} = 
				\sum_{(p,f)} \frac{1}{|\text{Aut}(p,f)|}
			\end{equation*}
			where the $(p,f)$ have profile $(\overline{\lambda},n_0,n_1,\dots,n_{m-1})$
			and all distinct.
			
			\begin{proposition}\cite{Zhang}\label{Prop:HurwRamCover}
				The combinatorial definition of Hurwitz numbers $h_{n_0,n_1,\dots,n_{m-1}\overline{\lambda}}$
				and the algebro-geometric
				definitions $\mathbf{h}_{n_0,n_1,\dots,n_{m-1}\overline{\lambda}}$ are equivalent.
			\end{proposition}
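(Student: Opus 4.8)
The plan is to deduce the equivalence from the monodromy dictionary of \cite{Zhang} recalled above, reducing the weighted count of covers to the plain count of reflection factorizations and absorbing the automorphism weights through an orbit--stabilizer argument. Write $G\bydef \Integer/m\Integer\wr S_n$, fix the critical locus $Y=\{\infty\}\cup\{y_{\alpha,i}\}$ and a base point $b$, and choose standard generating loops $\gamma_\infty,\gamma_{\alpha,i}$ of $\pi_1(\Complex P^1\setminus Y,b)$. This fundamental group is free on these loops modulo the single relation that their product, taken in a fixed cyclic order, is trivial. By the last lemma of \cite{Zhang}, isomorphism classes of $G(m,1,n)$-ramified covers with critical values $Y$ correspond bijectively to $G$-conjugacy classes of homomorphisms $\mathfrak M\colon\pi_1(\Complex P^1\setminus Y,b)\to G$, and such an $\mathfrak M$ is nothing but a tuple $\bigl(\sigma_\infty,(\sigma_{\alpha,i})\bigr)$ of elements of $G$ subject to the one relation $\sigma_\infty\prod_{\alpha,i}\sigma_{\alpha,i}=e$.

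Next I would translate the profile condition. By the monodromy description, $(p,f)$ has profile $(\overline\lambda,n_0,\dots,n_{m-1})$ exactly when $\sigma_{\alpha,i}$ lies in the reflection class of $\beta_\alpha$-type for every $\alpha$ and $i$ (so $\sigma_{\alpha,i}\in\mathcal R$ when $\alpha=0$ and $\sigma_{\alpha,i}\in\mathcal L^\alpha$ when $\alpha\geq1$, using Proposition \ref{Prop:ConjClass}) and $\sigma_\infty\in C_{\overline\lambda}$. Using the relation to eliminate $\sigma_\infty=\bigl(\prod_{\alpha,i}\sigma_{\alpha,i}\bigr)^{-1}$, the condition at $\infty$ becomes the statement that the ordered product of the $M=n_0+\dots+n_{m-1}$ reflections lies in $C_{\overline\lambda}$, up to replacing $C_{\overline\lambda}$ by its inverse (color-negated) class; a routine check matches this with the convention of Definition \ref{Df:HurwNumb}. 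Since the points $y_{\alpha,i}$ are distinguishable, such tuples are precisely the reflection sequences of profile $(\overline\lambda,n_0,\dots,n_{m-1})$ counted, before the normalising factor, in Definition \ref{Df:HurwNumb}.

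The weights are then handled as follows. The essential point is that $\mathrm{Aut}(p,f)$ is isomorphic to the centralizer $Z_G(\mathfrak M(\pi_1))$, equivalently the stabilizer of the tuple $\bigl(\sigma_\infty,(\sigma_{\alpha,i})\bigr)$ under simultaneous $G$-conjugation. By orbit--stabilizer the $G$-orbit of a tuple has cardinality $|G|/|\mathrm{Aut}(p,f)|$, so summing the weight over isomorphism classes, i.e.\ over orbits, gives
\[
\mathbf{h}_{n_0,\dots,n_{m-1}\overline\lambda}=\sum_{(p,f)}\frac{1}{|\mathrm{Aut}(p,f)|}=\frac{1}{|G|}\,\#\{\text{tuples}\}=\frac{1}{m^n n!}\,\#\{\text{reflection factorizations}\}=h_{n_0,\dots,n_{m-1}\overline\lambda},
\]
which is exactly the combinatorial definition (recall $|G|=m^n n!$), since we allow disconnected $M$ and therefore impose no transitivity on $\mathfrak M$.

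The delicate step is the identification $\mathrm{Aut}(p,f)\cong Z_G(\mathfrak M(\pi_1))$ together with the orbit bookkeeping: one must check that a pair $(\tilde\phi,\phi)$ making the defining square commute corresponds bijectively to an element of $G$ commuting with all local monodromies, compatibly with the $\Integer/m\Integer$-equivariance of $p$, and that the not-necessarily-transitive homomorphisms are correctly weighted by the orbit-counting lemma. As \cite{Zhang} already supplies the covers $\leftrightarrow \mathrm{Hom}/G$ correspondence, what remains is essentially the classical ``automorphisms equal centralizer'' argument transported to the wreath-product setting, plus verifying the inverse-class and ordering conventions so that the two definitions agree verbatim.
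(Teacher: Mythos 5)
The paper itself offers no proof of Proposition \ref{Prop:HurwRamCover}: the statement is imported wholesale from \cite{Zhang}, and the preceding subsection only recalls the monodromy dictionary (covers $\leftrightarrow$ $\mathrm{Hom}(\pi_1(\Complex P^1\setminus Y),\Integer/m\Integer\wr S_n)/(\Integer/m\Integer\wr S_n)$) needed to make sense of it. Your argument is precisely the standard Riemann-existence plus orbit--stabilizer proof that underlies the cited result, so it is not a different route so much as a filling-in of the route the paper declines to walk; in outline it is correct, and the final count $\#\{\text{tuples}\}/|G|$ with $|G|=m^nn!$ does reproduce Definition \ref{Df:HurwNumb}, including the disconnected/non-transitive case (where the centralizer correctly accounts for automorphisms permuting isomorphic components).

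Three bookkeeping points deserve more than the wave you give them. First, the identification $\mathrm{Aut}(p,f)\cong Z_G(\mathfrak M(\pi_1))$ is genuinely the content of the proposition in the wreath setting: an automorphism here is a \emph{pair} $(\tilde\phi,\phi)$ with $\tilde\phi$ equivariant over $\phi$, and one must check that the deck data on the principal $\Integer/m\Integer$-bundle part contributes exactly the cycle-product information already encoded in $G$, no more and no less; you correctly flag this but it is the one step you cannot outsource to the classical case. Second, the inverse-class issue is not entirely cosmetic, because $(\mathcal L^{k})^{-1}=\mathcal L^{m-k}$, so a careless orientation convention would permute the parameters $n_1,\dots,n_{m-1}$ as well as negate the colors of $\overline\lambda$; one must fix the loop around $\infty$ (or the cyclic order in the relation) so that the product of the finite monodromies is required to lie in $C_{\overline\lambda}$ itself. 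Third, your phrase ``the points $y_{\alpha,i}$ are distinguishable'' glosses over whether the assignment of types to branch points is prescribed or free: Definition \ref{Df:HurwNumb} counts sequences with the $\mathcal R$'s and $\mathcal L^k$'s interleaved arbitrarily, so the geometric count must likewise range over all ways of distributing the types among the $M$ fixed branch points, on pain of a discrepancy by the multinomial factor $M!/(n_0!\cdots n_{m-1}!)$. None of these invalidates the approach, but they are exactly the conventions that must be pinned down for the two definitions to agree ``verbatim,'' as you put it.
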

			
			\section{ELSV-type formula}
			
			Given the description of the family of the cut-and-join equation, it is possible to reduce the problem of computation of the generating function $\mathcal H$ to the well-known case of usual Hurwitz numbers. The operators $\mathcal{CJ}_i$ commute for all $i = 0,\ldots,m-1$, so we can use the following strategy: first apply operators $\mathcal {CJ}_k$ to the initial condition and then try to relate the result of the action of $\mathcal {CJ}_0$ on the resulting function to the studied cases.
			\begin{proposition}
				$$e^{\sum_{k = 1}^{m-1} \beta_k \mathcal{CJ}_k} e^{p_1^{(0)}} = \prod_{\alpha = 0}^{m-1} \exp{(e^{\sum_{k = 1}^{m-1} \beta_k \xi^{k\alpha }} u_1^{(\alpha)} )}$$
			\end{proposition}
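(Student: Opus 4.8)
The plan is to transport everything to the $u$-variables introduced in Proposition \ref{Prop:CJinU}, where the operators $\mathcal{CJ}_k$ become diagonal combinations of Euler fields acting on pairwise disjoint blocks of variables and the initial condition factorizes. In those coordinates each block decouples completely, and the statement reduces to a one-variable computation repeated $m$ times.

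First I would record the two inputs supplied by the change of variables. Since $p_1^{(0)} = u_1^{(0)} + u_1^{(1)} + \dots + u_1^{(m-1)}$, the initial condition splits as
\begin{equation*}
e^{p_1^{(0)}} = \prod_{\alpha=0}^{m-1} e^{u_1^{(\alpha)}}.
\end{equation*}
By Proposition \ref{Prop:CJinU} one has $\mathcal{CJ}_k = \mathcal{E}_k = \sum_{\alpha=0}^{m-1} \xi^{k\alpha} E_{u^{(\alpha)}}$ for $k = 1,\dots,m-1$, so interchanging the two sums gives
\begin{equation*}
\sum_{k=1}^{m-1} \beta_k \mathcal{CJ}_k = \sum_{\alpha=0}^{m-1} c_\alpha E_{u^{(\alpha)}}, \qquad c_\alpha \bydef \sum_{k=1}^{m-1} \beta_k \xi^{k\alpha}.
\end{equation*}

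Next I would exponentiate. The Euler fields $E_{u^{(0)}},\dots,E_{u^{(m-1)}}$ act on disjoint sets of variables, hence commute, so the exponential of the sum factors as a product of commuting exponentials, each confined to its own block:
\begin{equation*}
e^{\sum_{k=1}^{m-1} \beta_k \mathcal{CJ}_k}\, e^{p_1^{(0)}} = \prod_{\alpha=0}^{m-1} e^{c_\alpha E_{u^{(\alpha)}}}\, e^{u_1^{(\alpha)}}.
\end{equation*}
It then remains to evaluate the single factor $e^{c_\alpha E_{u^{(\alpha)}}} e^{u_1^{(\alpha)}}$.

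The key computation — and essentially the only point requiring care — is the action of one exponentiated Euler field. Working in the weighted grading with $\deg u_i^{(\alpha)} = i$, the field $E_{u^{(\alpha)}}$ acts on a weighted-homogeneous element of degree $d$ as multiplication by $d$; in particular $E_{u^{(\alpha)}}\bigl((u_1^{(\alpha)})^n\bigr) = n\,(u_1^{(\alpha)})^n$ because $u_1^{(\alpha)}$ has degree $1$. Expanding $e^{u_1^{(\alpha)}} = \sum_{n \ge 0} (u_1^{(\alpha)})^n / n!$ and applying $e^{c_\alpha E_{u^{(\alpha)}}}$ term by term therefore yields
\begin{equation*}
e^{c_\alpha E_{u^{(\alpha)}}} e^{u_1^{(\alpha)}} = \sum_{n \ge 0} \frac{e^{c_\alpha n}}{n!} (u_1^{(\alpha)})^n = \exp\bigl(e^{c_\alpha} u_1^{(\alpha)}\bigr).
\end{equation*}
Substituting $c_\alpha = \sum_{k=1}^{m-1} \beta_k \xi^{k\alpha}$ and taking the product over $\alpha$ recovers exactly the right-hand side. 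The main obstacle is merely justifying this last step in the formal, infinitely-many-variables setting, which is handled by observing that $E_{u^{(\alpha)}}$ and its exponential both preserve the weighted grading and are diagonal on the homogeneous basis, so every manipulation above is legitimate on each graded component separately; equivalently, one may note that $e^{c_\alpha E_{u^{(\alpha)}}}$, being the exponential of a derivation, is an algebra homomorphism and hence commutes with $\exp$, reducing the claim to $e^{c_\alpha E_{u^{(\alpha)}}} u_1^{(\alpha)} = e^{c_\alpha} u_1^{(\alpha)}$.
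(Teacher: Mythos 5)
Your proof is correct and follows essentially the same route as the paper: both reduce the claim to the single-block computation $e^{\gamma_\alpha E_{u^{(\alpha)}}} e^{u_1^{(\alpha)}} = \exp\bigl(e^{\gamma_\alpha} u_1^{(\alpha)}\bigr)$ via the term-by-term expansion $E_{u^{(\alpha)}}^k (u_1^{(\alpha)})^l = l^k (u_1^{(\alpha)})^l$, after observing that the Euler fields act on disjoint variable blocks and that $e^{p_1^{(0)}}$ factors accordingly. You merely spell out the decoupling and the grading justification more explicitly than the paper does.
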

			\begin{proof}
				It is enough to compute $e^{\sum_{k = 1}^{m-1} \beta_k \xi^{k\alpha} E_{u^{(\alpha)}} }e^{u_1^{(\alpha)}}$, the result will be obtained by the product of these expressions for all $\alpha$. Denote the coefficient $\sum_{k=1}^{m-1} \beta_k \xi^{k\alpha}$ by $\gamma_\alpha$. Then we have
				\begin{gather*}
					e^{\gamma_\alpha E_{u^{(\alpha)}}}e^{u^{(\alpha)}_1} 
					= \sum_{k = 0}^\infty \frac 1{k!} \gamma_\alpha^k E_{u^{(\alpha)}}^k \sum_{l = 0}^\infty \frac 1{l!} (u_1^{(\alpha)})^l \\ =
					\sum_{k,l = 0}^\infty \frac{1}{k!l!} \gamma_\alpha^k l^k (u_1^{(\alpha)})^l = \sum_{l = 0}^\infty \frac 1{l! }(u_1^{(\alpha)})^l e^{\gamma_\alpha l} = \exp({e^{\gamma_\alpha} u_1^{(\alpha)}}).
				\end{gather*}
			\end{proof}
			
			Now, consider the action of $\mathcal{CJ}_0$. Notice, that the exponent $e^{\mathcal{CJ}_0}$ splits in the product of commuting operators, each depending on $u^{(\alpha)}$ with $\alpha $ fixed only. So, the result can be naturally written down as a product.
			
			\begin{proposition}
				The generating function 
				$$e^{\beta_0 m \mathcal{CJ}_{u^{(\alpha)}}} \exp{(e^{\sum_{k=1}^m\beta_k \xi^{k\alpha}} u_1^{(\alpha)})}   $$
				coincides with the generating function for the disconnected weighted usual simple Hurwitz numbers $e^{G^{(\alpha)}}$, where
				\begin{gather*} G^{(\alpha)} = \sum_{\begin{smallmatrix} g\ge 0, n \ge 1 \\ k_1,\ldots, k_n \in \mathbb N \end{smallmatrix}} \frac{(m\beta_0)^{2g - 2 + n + \sum_{i = 1}^n k_n }e^{\sum_{i = 1}^n k_n\sum_{k=1}^m\beta_k \xi^{k\alpha}}}{(2g - 2 + n + \sum_{i = 1}^n k_n)!} \frac{h_{g;k_1,\ldots,k_n} u_{k_1}^{(\alpha)}\ldots u_{k_n}^{(\alpha)}}{n!} .
				\end{gather*}
				
			\end{proposition}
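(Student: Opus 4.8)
The plan is to reduce the claim to the classical cut-and-join description of simple Hurwitz numbers by peeling off the scalar $e^{\gamma_\alpha}$, where I abbreviate $\gamma_\alpha \bydef \sum_{k=1}^{m-1}\beta_k\xi^{k\alpha}$ as in the preceding proposition. The first observation is that the initial condition is the image of the classical one under an Euler rescaling: since $E_{u^{(\alpha)}}$ acts on monomials by their weight ($E_{u^{(\alpha)}}u_k^{(\alpha)}=k\,u_k^{(\alpha)}$), the flow $e^{\gamma_\alpha E_{u^{(\alpha)}}}$ is the algebra automorphism substituting $u_k^{(\alpha)}\mapsto e^{k\gamma_\alpha}u_k^{(\alpha)}$, and in particular
\[
\exp\bigl(e^{\gamma_\alpha}u_1^{(\alpha)}\bigr)=e^{\gamma_\alpha E_{u^{(\alpha)}}}\exp\bigl(u_1^{(\alpha)}\bigr).
\]
The second observation is that $\CJ_{u^{(\alpha)}}$ is homogeneous of weight $0$ for the grading $\deg u_k^{(\alpha)}=k$ (each of its monomials $u_{i+j}^{(\alpha)}\partial_{u_i^{(\alpha)}}\partial_{u_j^{(\alpha)}}$ and $u_i^{(\alpha)}u_j^{(\alpha)}\partial_{u_{i+j}^{(\alpha)}}$ has weight $0$), so that $[E_{u^{(\alpha)}},\CJ_{u^{(\alpha)}}]=0$ and the exponentiated operators $e^{\beta_0 m\CJ_{u^{(\alpha)}}}$ and $e^{\gamma_\alpha E_{u^{(\alpha)}}}$ commute.

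Combining these, I would write
\[
e^{\beta_0 m\CJ_{u^{(\alpha)}}}\exp\bigl(e^{\gamma_\alpha}u_1^{(\alpha)}\bigr)
=e^{\gamma_\alpha E_{u^{(\alpha)}}}\,e^{\beta_0 m\CJ_{u^{(\alpha)}}}\exp\bigl(u_1^{(\alpha)}\bigr),
\]
and then invoke the classical theorem of Okounkov and Lando--Kazarian \cite{Okounkov,LandoKazarian}: with $\beta\bydef m\beta_0$, the series $e^{\beta\CJ_{u^{(\alpha)}}}\exp(u_1^{(\alpha)})$ is exactly the generating function of disconnected usual simple Hurwitz numbers, whose logarithm is the connected generating function $\sum_{g,n,k_1,\dots,k_n}\frac{\beta^{b}}{b!}\frac{h_{g;k_1,\dots,k_n}}{n!}u_{k_1}^{(\alpha)}\cdots u_{k_n}^{(\alpha)}$, with $b=2g-2+n+\sum_i k_i$ fixed by Riemann--Hurwitz. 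Applying the automorphism $e^{\gamma_\alpha E_{u^{(\alpha)}}}$, which (being an algebra homomorphism) passes through the outer exponential and multiplies each monomial $u_{k_1}^{(\alpha)}\cdots u_{k_n}^{(\alpha)}$ by $e^{\gamma_\alpha\sum_i k_i}$, reproduces precisely the weighting $e^{(\sum_i k_i)\gamma_\alpha}$ appearing in $G^{(\alpha)}$, whence the result equals $e^{G^{(\alpha)}}$.

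The routine but essential bookkeeping, and the main place where care is needed, is matching normalizations: one must verify that the power of $m\beta_0$ produced by the classical solution is exactly $b=2g-2+n+\sum_i k_i$, the number of simple branch points forced by Riemann--Hurwitz for a degree-$\sum_i k_i$ cover of $\CP^1$ with $n$ preimages over $\infty$, and that the factor $m$ is correctly carried by $\beta_0$ through the relation $\CJ_0=m(\CJ_{u^{(0)}}+\dots+\CJ_{u^{(m-1)}})$ of Proposition \ref{Prop:CJinU}. Since $e^{\gamma_\alpha E_{u^{(\alpha)}}}$ is a continuous algebra automorphism it commutes with forming the exponential, so the connected/disconnected dictionary is preserved and no genuine analytic obstacle arises; the whole statement is the classical result transported through a weight-preserving rescaling.
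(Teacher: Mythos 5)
Your proof is correct, but it takes a genuinely different route from the paper. The paper's own proof is a two-line appeal to uniqueness: both sides satisfy the evolution equation $\partial_{\beta_0}F = m\,\CJ_{u^{(\alpha)}}F$ with the same initial condition at $\beta_0=0$, and it defers to \cite{Zhu} and \cite{DK18} for the fact that $e^{G^{(\alpha)}}$, a generating function of double/weighted Hurwitz type, satisfies that cut-and-join equation. You instead make the statement self-contained: you observe that the weighted initial condition is the Euler flow applied to the classical one, $\exp(e^{\gamma_\alpha}u_1^{(\alpha)}) = e^{\gamma_\alpha E_{u^{(\alpha)}}}\exp(u_1^{(\alpha)})$, that $[E_{u^{(\alpha)}},\CJ_{u^{(\alpha)}}]=0$ because $\CJ$ preserves the weight grading $\deg u_k^{(\alpha)}=k$, and you then transport the classical Okounkov/Kazarian--Lando description of $e^{\beta\CJ}e^{p_1}$ through the algebra automorphism $u_k^{(\alpha)}\mapsto e^{k\gamma_\alpha}u_k^{(\alpha)}$. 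This buys a derivation that needs only the unweighted classical case as input and, as a byproduct, actually proves the evolution-equation property of $e^{G^{(\alpha)}}$ that the paper merely cites; the paper's version is shorter but leans on the external references for that key verification. Your bookkeeping of the exponent $b=2g-2+n+\sum_i k_i$ via Riemann--Hurwitz and of the factor $m$ entering through $\CJ_0=m(\CJ_{u^{(0)}}+\dots+\CJ_{u^{(m-1)}})$ matches the displayed formula (the statement's $\sum_{i=1}^n k_n$ and $\sum_{k=1}^{m}$ are evidently typos for $\sum_{i=1}^n k_i$ and $\sum_{k=1}^{m-1}$, which your argument silently and correctly repairs).
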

			
			\begin{proof}
				Both functions satisfy the same evolution equation and the same initial condition. For the details see \cite{Zhu} and \cite{DK18}
			\end{proof}
			
			Usual Hurwitz numbers are known to satisfy the ELSV-formula \cite{ELSV01}: for any $g\ge 0, k_1,\ldots, k_n \in \mathbb N$ we have
			$$\mathfrak{H}_{g;k_1,\ldots,k_n} = (2g - 2 + n + \sum_{i = 1}^n k_i)! {\prod_{i = 1}^n \frac{k_i^{k_i}}{k_i!} \int_{\widebar{\mathcal M}_{g,n}} \frac {\Lambda_{g,n}}{(1 - k_1 \psi_1)\cdots (1 - k_n \psi_n)} },$$ 
			where $\widebar{\mathcal M}_{g,n}$ is the Deligne-Mumford compactification of the moduli space of genus $g$ curves with $n$ marked points, $\Lambda_{g,n}$ is the total Chern class of the dual to the Hodge vector bundle, and $\psi_i$ are the corresponding $\psi$-classes. Combining the formulae, we obtain the following:
			
			\begin{theorem}\label{Th:ELSV}
				The logarithm of the generating function $\mathcal H$ in variables $u^{(\alpha)}$ equals
				\begin{align*}
					\log{\mathcal H} = \sum_{\alpha = 0}^{m-1} G^{(\alpha)}
					=& \sum_{\alpha = 0}^{m-1} \sum_{\begin{smallmatrix} g\ge 0, n \ge 1 \\ k_1,\ldots, k_n \in \mathbb N \end{smallmatrix}} \frac{(m\beta_0)^{2g - 2 + n + \sum_{i = 1}^n k_n}e^{\sum_{i = 1}^n k_n\sum_{k=1}^m\beta_k \xi^{k\alpha}}}{n!} \\
					&\times {\prod_{i = 1}^n \frac{k_i^{k_i}}{k_i!} \int_{\widebar{\mathcal M}_{g,n}} \frac {\Lambda_{g,n}}{(1 - k_1 \psi_1)\cdots (1 - k_n \psi_n)} u_{k_1}^{(\alpha)}\ldots u_{k_n}^{(\alpha)}}.
				\end{align*}
				
			\end{theorem}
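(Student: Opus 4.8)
The plan is to assemble the statement from three ingredients already at hand: the product form of $\mathcal{H}$ given by Corollary~\ref{Cor:exp(p1)}, the two propositions just proved in this section, and the classical ELSV formula recalled above. The whole argument is a matter of chaining these together and tracking the combinatorial prefactors.

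First I would start from Corollary~\ref{Cor:exp(p1)}, namely $\mathcal{H} = e^{\beta_0\CJ_0 + \dots + \beta_{m-1}\CJ_{m-1}}\,e^{p_1^{(0)}}$. Since the operators $\CJ_0,\dots,\CJ_{m-1}$ pairwise commute, the exponential factors and
\begin{equation*}
	\mathcal{H} = e^{\beta_0\CJ_0}\,e^{\sum_{k=1}^{m-1}\beta_k\CJ_k}\,e^{p_1^{(0)}}.
\end{equation*}
The inner action $e^{\sum_{k}\beta_k\CJ_k}e^{p_1^{(0)}}$ is precisely the content of the first proposition of this section, which rewrites it as $\prod_{\alpha=0}^{m-1}\exp\bigl(e^{\sum_{k}\beta_k\xi^{k\alpha}}u_1^{(\alpha)}\bigr)$ in the $u$-variables.

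Next I would invoke Proposition~\ref{Prop:CJinU}, which gives $\CJ_0 = m\bigl(\CJ_{u^{(0)}}+\dots+\CJ_{u^{(m-1)}}\bigr)$ with each summand $\CJ_{u^{(\alpha)}}$ depending only on the block of variables $u^{(\alpha)}$. Hence these pieces commute and $e^{\beta_0\CJ_0} = \prod_{\alpha=0}^{m-1} e^{m\beta_0\CJ_{u^{(\alpha)}}}$. Matching the $\alpha$-th operator with the $\alpha$-th factor above, the second proposition of this section identifies
\begin{equation*}
	e^{m\beta_0\CJ_{u^{(\alpha)}}}\exp\bigl(e^{\sum_{k}\beta_k\xi^{k\alpha}}u_1^{(\alpha)}\bigr) = e^{G^{(\alpha)}}.
\end{equation*}
Taking the product over $\alpha$ yields $\mathcal{H} = \prod_{\alpha=0}^{m-1}e^{G^{(\alpha)}} = \exp\bigl(\sum_{\alpha}G^{(\alpha)}\bigr)$, whence $\log\mathcal{H} = \sum_{\alpha=0}^{m-1}G^{(\alpha)}$, which is the first equality of the theorem.

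To obtain the explicit form I would substitute the ELSV formula for the usual simple Hurwitz numbers $h_{g;k_1,\dots,k_n}$ into the definition of each $G^{(\alpha)}$: the prefactor $(2g-2+n+\sum_i k_i)!$ produced by ELSV cancels the identical denominator in $G^{(\alpha)}$, leaving exactly the Hodge-integral expression stated. The only point demanding care is the bookkeeping of the commuting exponentials, and this rests entirely on the variable-separation of Proposition~\ref{Prop:CJinU}: after the change of variables $\CJ_0$ splits into pieces acting on disjoint blocks, and that same separation is what permits applying the two propositions block by block. Once this is granted the computation is purely formal, so I do not expect any genuine obstacle beyond verifying the cancellation of the factorial prefactors.
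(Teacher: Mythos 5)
Your proposal is correct and follows essentially the same route as the paper: the paper's proof consists precisely of combining Corollary~\ref{Cor:exp(p1)}, the two propositions of Section~6 (splitting off the $\mathcal{E}_k$-action and identifying each $u^{(\alpha)}$-block with the generating function of weighted simple Hurwitz numbers via Proposition~\ref{Prop:CJinU}), and the classical ELSV formula, with the factorial prefactor cancelling exactly as you describe. You merely spell out the chaining that the paper compresses into ``combining the formulae.''
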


		\end{document}